\newcommand{\be}{\begin{equation}}
\newcommand{\ee}{\end{equation}}
\newcommand{\ba}{\begin{array}}
\newcommand{\ea}{\end{array}}
\newcommand{\bea}{\begin{eqnarray}}
\newcommand{\eea}{\end{eqnarray}}
\newcommand{\bee}{\begin{eqnarray*}}
\newcommand{\eee}{\end{eqnarray*}}
\newcommand{\tr}{{\rm Tr}}
\newcommand{\ccA}{\mathscr{A}}
\newtheorem{theorem}{Theorem}
\newtheorem{lemma}{Lemma}
\newtheorem{remark}{Remark}
\newtheorem{conjecture}{Conjecture}
\newtheorem*{remark*}{Remark}
\numberwithin{equation}{section}
\def\section{\@startsection{section}{1}%
  \z@{1.5\linespacing\@plus\linespacing}{.5\linespacing}%
  {\normalfont\bfseries\large\centering}}
\newcommand{\R}{\mathbb{R}}
\renewcommand{\leq}{\leqslant}
\renewcommand{\geq}{\geqslant}
\newcommand{\weakto}{\rightharpoonup}
\newcommand{\wto}{\weakto}
\newcommand{\ii}{\infty}
\newcommand\1{{\ensuremath {\mathds 1} }}
\newcommand{\cS}{\mathcal S}
\newcommand{\cE}{\mathcal E}
\renewcommand{\epsilon}{\varepsilon}
\newcommand\pscal[1]{{\ensuremath{\left\langle #1 \right\rangle}}}
\newcommand{\norm}[1]{ \left| \! \left| #1 \right| \! \right| }
\renewcommand{\phi}{\varphi}
\begin{document}

\title{Dynamical Ionization Bounds for Atoms}

\author[E. Lenzmann]{Enno LENZMANN}
\address{Universit\"at Basel, Mathematisches Institut, Rheinsprung 21, CH-4041 Basel, Switzerland.}
\email{enno.lenzmann@unibas.ch}

\author[M. Lewin]{Mathieu LEWIN}
\address{CNRS \& Laboratoire de Math\'ematiques (UMR 8088), Universit\'e de Cergy-Pontoise, F-95000 Cergy-Pontoise, France.}
\email{mathieu.lewin@math.cnrs.fr}

\date{\today}

\begin{abstract} 
We study the long-time behavior of the 3-dimensional repulsive nonlinear Hartree equation with an external attractive Coulomb potential $-Z/|x|$, which is a nonlinear model for the quantum dynamics of an atom. We show that, after a sufficiently long time, the average number of electrons in any finite ball is always smaller than $4Z$ (respectively $2Z$ in the radial case). This is a time-dependent generalization of a celebrated result by E.\,H.~Lieb on the maximum negative ionization of atoms in the stationary case. Our proof involves a novel positive commutator argument (based on the cubic weight $|x|^3$) and our findings are reminiscent of the RAGE theorem. 

In addition, we prove a similar universal bound on the local kinetic energy. In particular, our main result means that, in a weak sense, any solution is attracted to a bounded set in the energy space, whatever the size of the initial datum. Moreover, we extend our main result to Hartree--Fock theory and to the linear many-body Schr\"odinger equation for atoms.

\bigskip

\noindent{\scriptsize\copyright~2012 by the authors. This paper may be reproduced, in its entirety, for non-commercial purposes.}
\end{abstract}

\maketitle

\section{Introduction and Main Result}

The rigorous attempt to answering the question {\em ``How many electrons can a nucleus bind?''}~has received substantial attention in the literature over the last decades~\cite{Ruskai-82a,Ruskai-82,Sigal-82,Sigal-84,Lieb-84,LieSigSimThi-88,Solovej-91,Solovej-03,Nam-12}. So far, the question was only addressed in a time-independent setting, that is, the absence of bound states was shown when the number of electrons in the atom is too large. In the present paper we shall rigorously formulate and provide an answer to a similar question in the time-dependent setting:~{\em `` How many electrons can a nucleus keep in its neighborhood for a long time?''}.

Our main purpose is therefore the rigorous understanding of the long-time behavior of atoms. We shall prove for instance that, in the Hartree approximation, a nucleus of charge $Z$ cannot bind in a time-averaged sense more than $4Z$ electrons (respectively $2Z$ in the radial case). In particular, we will recover some of the known time-independent results (non-existence of bound states) by different arguments, though. One key ingredient in our paper turns out to be a new commutator estimate leading to a novel monotonicity formula, which may be of independent interest for both linear and nonlinear Schr\"odinger equations.

As a model for the quantum dynamics of an atom, let us first consider  the time-dependent nonlinear Hartree equation with an external Coulomb potential:
\begin{equation}
\begin{cases}
\displaystyle i\frac{\partial}{\partial t}u(t,x)=\left(-\Delta-\frac{Z}{|x|}+|u|^2\ast\frac{1}{|x|}\right)u(t,x),\\[0.3cm]
u(0,x)=u_0(x)\in H^1(\R^3).
\end{cases}
\label{eq:Hartree-equation}
\end{equation}
Here $u(t,x)$ describes the quantum state of the electrons (which are treated as bosons for simplicity) in an atom~\cite{Hartree-28,Hartree-28b,Slater-30}. The terms in the parenthesis are respectively the kinetic energy operator of the electrons, the electrostatic attractive interaction with the nucleus of charge $Z$, and the mutual repulsion between the electrons themselves (in units such that $m=2$ and $\hbar=e=1$). The total number of electrons in the system is a conserved quantity, which is given by
$$\int_{\R^3}|u(t,x)|^2\,dx=\int_{\R^3}|u_0(x)|^2\,dx =:N.$$
In physical applications, the number $N$ is an integer but it is convenient to allow any positive real number here. Note that, in Section \ref{sec:extensions} below, we will also consider the physically more accurate Hartree--Fock model as well as the full many-body Schr\"odinger equation describing atoms. But for the time being, we deal  with the Hartree equation.

The nonlinear equation~\eqref{eq:Hartree-equation} and many variations thereof have been studied extensively in the literature. The existence of a unique strong global-in-time solution to \eqref{eq:Hartree-equation} with an initial datum $u_0\in H^1(\R^3)$ goes back to Chadam and Glassey~\cite{ChaGla-75}. Their argument is based on a fixed point argument combined with the conservation of the Hartree energy defined by
\begin{equation}
\cE_Z(u):=\int_{\R^3}|\nabla u(x)|^2\,dx-Z\int_{\R^3}\frac{|u(x)|^2}{|x|}\,dx+\frac12\int_{\R^3}\int_{\R^3}\frac{|u(x)|^2u(y)|^2}{|x-y|}\,dx\,dy.
\label{eq:Hartree-energy}
\end{equation}
In fact, the global well-posedness result for \eqref{eq:Hartree-equation} can be extended to initial data in $L^2(\R^3)$; see for instance~\cite{HayOza-89,Castella-97}. However, in what follows, we will always assume that $u_0$ lies in the energy space $H^1(\R^3)$ so that its corresponding energy is well-defined.

When $Z\leq0$, the solution $u(t)$ to the Hartree equation \eqref{eq:Hartree-equation} exhibits a purely dispersive behavior, which has been studied by many authors. Here, some works were devoted to the understanding of the dispersive effects for any initial datum~\cite{Glassey-77,DiaFig-81,HayOza-87,Hayashi-88,GasIllMarSch-98,SanSol-04}, whereas several others dealt with the construction of (modified) scattering~\cite{GinVel-80,GinOza-93,HayNauOza-98,HayNau-98,GinVel-00,GinVel-00b,LopSol-00,Wada-01,Nakanishi-02}.

In this paper, we are interested in the physically more relevant case when $Z>0$ holds, which corresponds to having an external attractive long-range potential due to the presence of a positively charged atomic nucleus. The electrons can (and will) now be bound by the nucleus, and the problem to understand the long-time behavior of solutions is much more delicate. For instance, it was already noticed by Chadam and Glassey in~\cite[Thm. 4.1]{ChaGla-75} that the solution $u(t)$ cannot tend to zero in $L^\ii(\R^3)$ as $t\to\ii$ for negative energies $\cE_Z(u_0)<0$, which can occur if $Z> 0$ holds. 

When $Z >0$, there exists nonlinear bound states, which are solutions of Equation~\eqref{eq:Hartree-equation} taking the simple form $u(x)e^{-it\lambda}$, where $u \in H^1(\R^3)$ solves the nonlinear eigenvalue equation
\begin{equation}
\left(-\Delta-\frac{Z}{|x|}+|u|^2\ast\frac{1}{|x|}\right)u=\lambda u.
\label{eq:Hartree-stationary}
\end{equation}
For any fixed $0<N\leq Z$, it is known that the equation~\eqref{eq:Hartree-stationary} has infinitely many solutions such that $\int_{\R^3}|u|^2=N$. Moreover, there is a unique positive solution, which minimizes the Hartree energy~\eqref{eq:Hartree-energy}~\cite{LieSim-77,Bader-78} subject to $N$ fixed, and the other (sign-changing) solutions can be constructed by min-max methods~\cite{Wolkowisky-72,Stuart-73,Lions-81}. The interpretation of the condition $0<N\leq Z$ is that the atom is neutral (if $N=Z$) or positively ionized (if $N < Z$). In this situation, it is not energetically favorable to send a positive fraction of $L^2$-mass $\mu > 0$, say, to spatial infinity, since the remaining charge is $Z-(N-\mu)>0$ positive and thus attractive far away from the origin. A more precise mathematical statement is that the Palais--Smale sequences with a bounded Morse index cannot exhibit a lack of compactness when $N\leq Z$, and this implies the existence of infinitely many critical points~\cite{BerLio-83b,Lions-87,Ghoussoub-93}.

It is known that there are bound states in the case of negative ionization, i.\,e.~when $N>Z$ holds. By~\cite[Thm. 7.19]{Lieb-81b} (see also~\cite{Benguria-79,BenBreLie-81}), there is a minimizer of the Hartree functional for $N$ slightly larger than $Z$. However, it is physically clear that there should not be any bound state when $N$ is too large compared to $Z$, because a given nucleus is not expected to bind too many electrons compared to its nuclear charge. In~\cite{Benguria-79,Lieb-81b,Lieb-84}, it was proved that there exists a universal critical constant $1<\gamma_c<2$ such that the equation~\eqref{eq:Hartree-stationary} has \emph{no solution} for $N> \gamma_c Z$ but has at least one for $N\leq \gamma_c Z$. That $\gamma_c$ is independent of $Z$ follows from a simple scaling argument.

Let us now collect some basic facts about the set of solutions of the time-independent problem \eqref{eq:Hartree-stationary}. For  any $u\in H^1(\R^3)$, the self-adjoint operator
$$-\Delta-\frac{Z}{|x|}+|u|^2\ast\frac{1}{|x|}$$
has no positive eigenvalue, by the Kato--Agmon--Simon theorem~\cite[Thm. XIII.58]{ReeSim4}. This shows  that necessarily $\lambda\leq0$ in Equation~\eqref{eq:Hartree-stationary}. Furthermore, we can derive an upper bound on $\| \nabla u \|_{L^2}$ which only depends on $Z$ as follows. If $u \in H^1(\R^3)$ solves \eqref{eq:Hartree-stationary}, then by taking then the scalar product with $u$ we find that
$$\int_{\R^3}|\nabla u(x)|^2\,dx\leq Z\int_{\R^3}\frac{|u(x)|^2}{|x|}\,dx\leq Z\norm{\nabla u}_{L^2}\norm{u}_{L^2}.$$
Here we have used the inequality
$$\int_{\R^3}\frac{|u(x)|^2}{|x|}\,dx\leq \min_{z\geq0}\left(\frac{z}{2}\int_{\R^3}|u|^2+\frac{1}{2z}\int_{\R^3}|\nabla u|^2\right) =\norm{u}_{L^2(\R^3)}\norm{\nabla u}_{L^2(\R^3)}$$
which follows from the value of the hydrogen ground state energy, $\inf\text{Spec}(-\Delta/2-z|x|^{-1})=-z^2/2$.
We conclude that any solution $u \in H^1(\R^3)$ to \eqref{eq:Hartree-stationary} must satisfy the bound
$$\int_{\R^3}|\nabla u|^2\leq \gamma_c Z^{3}.$$ 
Recalling that $\int_{\R^3} |u|^2 \leq \gamma_c Z$ holds, we conclude that the set of all stationary states 
\begin{equation}
\ccA_Z:=\Big\{u\in H^1(\R^3)\ :\ \text{$u$ solves~\eqref{eq:Hartree-stationary} for some $\lambda\leq0$}\Big\}
\end{equation}
is bounded in $H^1(\R^3)$. Elementary arguments show that $\ccA_Z$ is weakly compact in $H^1(\R^3)$. But we note that the set $\ccA_Z$ is not compact in the strong $H^1$-topology. 

Supported by physical reasoning and rigorous results in linear scattering theory about asymptotic completeness (see Remark~\ref{rmk:linear} below), it is  common belief for infinite-dimensional Hamiltonian systems such as  \eqref{eq:Hartree-equation} that any of its solutions should behave for large times as a superposition of one or several states getting closer to the global attractor $\ccA_Z$, plus a dispersive part. This is what has already been shown for $Z\leq0$, in which case $\ccA_Z=\{0\}$. Not much is known in this direction for nonlinear Schr\"odinger equations~\cite{Tao-07,Tao-08}, and solving this problem (a.\,k.\,a.~soliton resolution) constitutes a major mathematical challenge. For the Hartree equation~\eqref{eq:Hartree-equation} studied in this paper, the situation is even less clear because of possible modified scattering due to the long-range effects of the Coulomb potential. We can, however, formulate a simpler (but weaker) conjecture as follows.

\begin{conjecture}[The global attractor]\label{conj:attractor}
Let $u(t)$ be the unique solution to the Hartree equation~\eqref{eq:Hartree-equation} for some $u_0\in H^1(\R^3)$. Take any sequence of times $t_n\to\ii$ such that $u(t_n)\wto u_*$ weakly in $H^1(\R^3)$. Then $u_*\in\ccA_Z$.
\end{conjecture}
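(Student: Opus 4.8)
The plan is to establish the conjecture by passing to the limit in the time-independent equation and exploiting the monotonicity formula alluded to in the abstract. The starting point is that the energy $\cE_Z(u(t))$ and the mass $\|u(t)\|_{L^2}^2$ are conserved, so $u(t)$ stays bounded in $H^1(\R^3)$ for all $t$, and hence along the given sequence $t_n$ we may extract (after a further subsequence, keeping the notation) strong convergence $u(t_n)\to u_*$ in $L^2_{\rm loc}(\R^3)$ and a.e., together with the weak $H^1$ convergence $u(t_n)\wto u_*$. The task is then to show that $u_*$ solves~\eqref{eq:Hartree-stationary} for some $\lambda\leq 0$. The natural route is to show that the nonlinear flow \emph{restricted to a large ball} becomes asymptotically stationary along $t_n$, i.e.\ to produce a sequence of times near $t_n$ on which $\partial_t u$ is small locally; this is exactly the kind of statement one gets from a positive commutator (Morawetz-type) estimate of the form $\frac{d}{dt}\pscal{u(t),A\,u(t)}\geq (\text{nonnegative local quantity})$, where $A$ is built from the cubic weight $|x|^3$ as announced. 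Integrating such an identity in time and using boundedness of the left-hand side forces the nonnegative local quantity to be integrable in $t$, hence to converge to $0$ along a sequence of times; combined with weak convergence this pins down $u_*$.

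Concretely, the key steps, in order, would be: \emph{(i)} derive the commutator identity for the observable $A$ associated with $|x|^3$ and verify that its time derivative along the Hartree flow controls from below a quantity comparable to a local kinetic energy plus a local version of $\|\partial_t u\|^2$ (or equivalently a local virial-type expression), with all error terms absolutely integrable in $t$ thanks to the a priori $H^1$ bound and the Coulomb estimates quoted in the excerpt; \emph{(ii)} integrate from $0$ to $T$ and conclude $\int_0^\infty (\text{local quantity})\,dt<\infty$, hence there is a sequence $s_n\to\infty$, which after relabeling we may take to coincide with (or lie arbitrarily close to) $t_n$, along which the local quantity tends to $0$; \emph{(iii)} pass to the limit $n\to\infty$ in the weak formulation of~\eqref{eq:Hartree-equation} tested against a fixed $\phi\in C_c^\infty(\R^3)$: the linear and potential terms converge by weak $H^1$ convergence and local compactness, the Hartree nonlinearity $(|u(t_n)|^2\ast|x|^{-1})u(t_n)$ converges to $(|u_*|^2\ast|x|^{-1})u_*$ on the support of $\phi$ (using $L^2_{\rm loc}$ strong convergence and the fact that the convolution with $|x|^{-1}$ is weakly continuous on bounded $H^1$ sequences), and the time-derivative term $\pscal{\phi,i\partial_t u(t_n)}$ is forced to $0$ by step (ii); \emph{(iv)} this shows $u_*$ solves $(-\Delta - Z/|x| + |u_*|^2\ast|x|^{-1})u_* = \lambda u_*$ in the distributional sense for some constant $\lambda$ (the constant emerging because, after the gauge $u(t)e^{i\theta(t)}$, the residual is proportional to $u_*$ itself — more carefully, one tests also against $u(t_n)$ to identify $\lambda = \lim \pscal{u(t_n), \cdots}/\|u(t_n)\|^2$ along the subsequence, which requires $u_*\neq 0$, and the case $u_*=0$ is trivially in $\ccA_Z$); finally $\lambda\leq 0$ follows from the Kato--Agmon--Simon argument recalled in the excerpt, since $u_*\in H^1$.

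The main obstacle I expect is step (ii), specifically the passage from ``the local quantity is integrable in time'' to the statement that it vanishes precisely along the \emph{prescribed} sequence $t_n$ (rather than along some sequence we get to choose). In general one only extracts a good sequence $s_n$, and $u(s_n)$ need not converge to the same limit as $u(t_n)$. To repair this, one argues that $\|u(t)\|_{L^2(B_R)}$ — more precisely the relevant ``local mass/energy in a ball of radius $\sim t^{1/3}$'' dictated by the cubic weight — is itself slowly varying, so that the deficit at $t_n$ is controlled by its running time-average, which goes to $0$; this is where the particular power $|x|^3$ and the RAGE-type structure are essential, and making this quantitative (uniform local equicontinuity of $t\mapsto u(t)$ in the weak topology, or a Cauchy--Schwarz bound $\int_{t_n}^{t_n+1}(\text{local quantity})\,dt \to 0$) is the technical heart of the argument. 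A secondary difficulty is the identification and sign of $\lambda$ when the weak limit is nonzero but the convergence is not strong in $H^1$ (so that energy can be lost to the dispersive part); here one must be careful to test against compactly supported functions only and to extract $\lambda$ as a limit along the subsequence rather than by a naive energy identity.
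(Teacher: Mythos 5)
You are attempting to prove Conjecture~\ref{conj:attractor}, which the paper explicitly leaves \emph{open}: what is actually established there (Theorem~\ref{thm:Hartree}) is only a time-averaged bound on the local mass and kinetic energy, and the authors stress that even the weaker Conjecture~\ref{conj:mass} may be essentially as hard. So your proposal must stand on its own, and it has a genuine gap at exactly the step that constitutes the open problem, namely your step (ii). The commutator/monotonicity formula that the cubic weight produces (see \eqref{eq:Virial} and its localized form \eqref{eq:final_lower_bound}) gives
\[
\frac{d}{dt}\pscal{u(t),A_{f_R}u(t)}\;\geq\;\kappa\,M_R(t)^2-\Big(2Z+\tfrac{3}{R}\Big)M_R(t),
\]
whose right-hand side is \emph{not} a nonnegative local quantity: it is negative whenever $M_R(t)$ is small, and it contains nothing resembling a local kinetic energy with a favorable sign, let alone a local version of $\norm{\partial_t u}^2$; the attractive $-Z/|x|$ destroys the positivity that makes the classical Morawetz argument work, as the paper itself points out. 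Since $\pscal{u,A_{f_R}u}$ is merely bounded, integrating in time yields precisely the time-averaged inequality of Theorem~\ref{thm:Hartree} and no space-time integrable positive quantity whose vanishing could be extracted along some sequence, a fortiori not along the \emph{prescribed} sequence $t_n$.

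Worse, the quantity you propose to drive to zero cannot exist even in principle: a bound state $u(x)e^{-i\lambda t}$ with $\lambda<0$ --- exactly the type of limit the conjecture allows --- has $\partial_t u(t)=-i\lambda u(t)$ of fixed, non-small size, so no dispersive or virial estimate can force local smallness of $\partial_t u$. Consequently your steps (iii)--(iv) are structurally incomplete: to pass to the limit in the weak formulation and land on \eqref{eq:Hartree-stationary} you must identify the weak limit of $i\,\partial_t u(t_n)$ as $\lambda u_*$ for a single real constant $\lambda\leq 0$, but weak $H^1$ convergence of $u(t_n)$ gives no control of $\partial_t u(t_n)$ beyond boundedness in $H^{-1}$, and its weak limit points need not be proportional to $u_*$. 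The ``gauge'' you invoke in passing is not a secondary difficulty; producing that single phase/eigenvalue is essentially equivalent to the conjecture itself, and it is also what fails in the linear RAGE setting if one insists on arbitrary sequences $t_n$ rather than Ces\`aro averages (recurrence phenomena for non--absolutely-continuous spectrum). In short, the paper's commutator method yields only the time-averaged statements, and the upgrade you sketch would require a new idea not contained in the proposal.
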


\begin{remark}[The many-body Schr\"odinger case]\label{rmk:linear}
Let us recall that the Hartree equation~\eqref{eq:Hartree-equation} is a nonlinear approximation of the linear many-body Schr\"odinger equation
\begin{equation}
\begin{cases}
\displaystyle i\frac{\partial}{\partial t}\Psi(t)=\left(\sum_{j=1}^N\left(-\Delta_{x_j}-\frac{Z}{|x_j|}\right)+\sum_{1\leq k<\ell\leq N}\frac{1}{|x_k-x_\ell|}\right)\Psi(t),\\[0.3cm]
\Psi(0)=\Psi_0\in H^1\left((\R^3)^N\right)
\end{cases}
\label{eq:many-body}
\end{equation}
On the contrary to the Hartree case where we can allow $N=\int_{\R^3}|u|^2$ to take any positive real value, the number $N$ of electrons must of course be an integer for~\eqref{eq:many-body}. The Hartree equation~\eqref{eq:Hartree-equation} is obtained by constraining the solution $\Psi(t)$ to stay on the manifold of product states of the form $\Psi(t,x_1,...,x_N)=\psi(t,x_1)\times\cdots\times\psi(t,x_N)$ and using the Dirac--Frenkel principle. Then $u(t)=\sqrt{N}\psi(t)$ solves~\eqref{eq:Hartree-equation}. 
Let us remark that~\eqref{eq:many-body} can be rewritten after a simple rescaling as follows
\begin{equation}
\begin{cases}
\displaystyle i\frac{1}{Z^2}\frac{\partial}{\partial t}\Psi(t)=\left(\sum_{j=1}^N\left(-\Delta_{x_j}-\frac{1}{|x_j|}\right)+\frac{1}{Z}\sum_{1\leq k<\ell\leq N}\frac{1}{|x_k-x_\ell|}\right)\Psi(t),\\[0.3cm]
\Psi(0)=\Psi_0\in H^1\left((\R^3)^N\right)
\end{cases}
\label{eq:many-body-scaled}
\end{equation}
Thus the limit of large $N \to \infty$ with $N/Z$ fixed corresponds to the usual mean-field limit. In this regime, Hartree's theory is known to properly describe (bosonic) atoms, both for ground states~\cite{BenLie-83b} 
and in the time-dependent case~\cite{ErdYau-01,BarGolMau-00}. See also \cite{Schlein-08, FroLen-04} for a review on mean-field limits and the Hartree approximation.

The many-body equation~\eqref{eq:many-body} looks complicated, but it has the advantage of being linear. In particular, the RAGE theorem tells us that the only possible non-zero weak limits of $\Psi(t)$ when $t\to\ii$ are bounds states of the Hamiltonian $H(N)$ in the parenthesis~\cite{Ruelle-69,AmrGeo-73,Enss-78,ReeSim3}. This is not a very precise description of the solution for large times because if some particles stay close to the nucleus while other escape to infinity, we will always get $\Psi(t)\wto0$ weakly in $H^1(\R^{3N})$, see~\cite{Lewin-11}. However, asymptotic completeness is known to hold for the linear evolution equation \eqref{eq:many-body}. This exactly says that any solution $\Psi(t)$ is, in an appropriate sense, a superposition of bound states of the operators $H(k)$ with $1\leq k\leq N$ and of scattering states~\cite{Derezinski-93,SigSof-94,HunSig-00b}. Because of the behavior of the underlying many-body system, it is reasonable to believe that the same should be true for the Hartree equation~\eqref{eq:Hartree-equation}. 
\end{remark}

A somewhat weaker property that would follow from Conjecture~\ref{conj:attractor} (at least for~\eqref{eq:limit_mass}) is that for large times, the local mass of any solution has to be smaller than  $\gamma_c Z$. 

\begin{conjecture}[Asymptotic number of electrons and kinetic energy]\label{conj:mass}
Let $u(t)$ be the unique solution to the Hartree equation~\eqref{eq:Hartree-equation} for some $u_0\in H^1(\R^3)$. Then
\begin{equation}
\limsup_{t\to\ii}\int_{|x|\leq r}|u(t,x)|^2\,dx\leq \sup_{u\in\ccA_Z}\int_{\R^3}|u|^2=\gamma_c Z
\label{eq:limit_mass}
\end{equation}
and
\begin{equation}
\limsup_{t\to\ii}\int_{|x|\leq r}|\nabla u(t,x)|^2\,dx\leq \sup_{u\in\ccA_Z}\int_{\R^3}|\nabla u|^2\leq \gamma_c Z^3
\label{eq:limit_kinetic}
\end{equation}
for all $r>0$.
\end{conjecture}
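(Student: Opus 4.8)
The natural plan is to derive Conjecture~\ref{conj:mass} from Conjecture~\ref{conj:attractor} together with the compactness of the Hartree flow on bounded subsets of $H^1$; as the text already hints, only the mass bound \eqref{eq:limit_mass} follows from the attractor conjecture alone, while the local kinetic bound \eqref{eq:limit_kinetic} needs one further input of RAGE type. The starting point is the uniform a priori bound that confines the dynamics to a fixed ball of $H^1$: since $\norm{u(t)}_{L^2}^2=N$ is conserved and, by conservation of the Hartree energy \eqref{eq:Hartree-energy} together with the hydrogen estimate $\int_{\R^3}|u|^2/|x|\le\norm{u}_{L^2}\norm{\nabla u}_{L^2}$ recalled above,
\[
\norm{\nabla u(t)}_{L^2}^2\;\le\;\cE_Z(u_0)+Z\int_{\R^3}\frac{|u(t)|^2}{|x|}\,dx\;\le\;\cE_Z(u_0)+Z\,N^{1/2}\norm{\nabla u(t)}_{L^2},
\]
one gets $\sup_{t\ge0}\norm{u(t)}_{H^1}<\ii$. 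Consequently, along any sequence $t_n\to\ii$ one may pass to a subsequence with $u(t_n)\wto u_*$ weakly in $H^1(\R^3)$, and by Rellich's theorem this convergence is strong in $L^2_{\rm loc}(\R^3)$ (and in $L^p$ on every ball $B_r:=\{|x|\le r\}$ for $p<6$).

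For \eqref{eq:limit_mass}: choose $t_n\to\ii$ realizing $\limsup_{t\to\ii}\int_{B_r}|u(t)|^2$ and pass to the weak limit $u_*$. By Conjecture~\ref{conj:attractor}, $u_*\in\ccA_Z$, so $\int_{\R^3}|u_*|^2\le\gamma_c Z$; by the strong local $L^2$ convergence, $\int_{B_r}|u(t_n)|^2\to\int_{B_r}|u_*|^2\le\int_{\R^3}|u_*|^2\le\gamma_c Z$, which is \eqref{eq:limit_mass}. For \eqref{eq:limit_kinetic} the same scheme, applied to a sequence $t_n$ realizing $\limsup_{t\to\ii}\int_{B_r}|\nabla u(t)|^2$, only produces by weak lower semicontinuity $\int_{B_r}|\nabla u_*|^2\le\liminf_n\int_{B_r}|\nabla u(t_n)|^2$, which is the \emph{wrong} inequality. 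What must be proved is that no local kinetic energy is lost in the weak limit along this sequence, i.e.\ $\nabla u(t_n)\to\nabla u_*$ strongly in $L^2(B_r)$. Concretely, one would establish a profile decomposition adapted to \eqref{eq:Hartree-equation} (in this $H^1$-subcritical setting the only defect of compactness comes from mass escaping to spatial infinity, with no scaling bubbling), write $u(t_n)=u_*+w_n$ with $u_*\in\ccA_Z$ and $w_n$ asymptotically dispersive, and then invoke a RAGE/local-smoothing statement to conclude $\int_{B_r}|\nabla w_n|^2\to0$. Granting this, $\limsup_{t\to\ii}\int_{B_r}|\nabla u(t)|^2=\int_{B_r}|\nabla u_*|^2\le\norm{\nabla u_*}_{L^2}^2\le\gamma_c Z^3$, the final inequality being exactly the bound on $\ccA_Z$ derived above.

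The main obstacle is therefore Conjecture~\ref{conj:attractor} itself (together with the dispersive decomposition needed for the kinetic part): proving it is of the same order of difficulty as soliton resolution and asymptotic completeness for a nonlinear equation carrying a long-range Coulomb tail, and it is currently out of reach. The route one can actually carry through at present is the positive commutator / monotonicity argument built on the cubic weight $|x|^3$ announced in the abstract: it yields a bounded, monotone functional of the form $M(t)=\pscal{u(t),\theta(|x|)\,u(t)}$ whose time derivative controls a nonnegative quantity $Q(u(t))$, so that $\int_0^TQ(u(t))\,dt$ stays bounded and one obtains a bound on the \emph{time-averaged} local mass --- but with a non-sharp constant (of order $Z$ rather than $\gamma_c Z$). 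Upgrading this to the pointwise $\limsup$ with the sharp constant $\gamma_c Z$ would require $Q$ to be coercive near $\ccA_Z$, i.e.\ a genuine Lyapunov function forcing the local mass to strictly decrease whenever it exceeds $\gamma_c Z$; the non-compactness of $\ccA_Z$ (infinitely many stationary states, including sign-changing ones) is the structural reason this coercivity fails, which is why Conjecture~\ref{conj:mass} as stated remains open and the present paper only establishes its time-averaged counterpart.
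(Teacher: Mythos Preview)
Your analysis is correct and matches the paper's own position: Conjecture~\ref{conj:mass} is explicitly left open, and the paper proves only the time-averaged weakening in Theorem~\ref{thm:Hartree} with the non-sharp constants $4Z$ (resp.\ $2Z$ in the radial case) via the $|x|^3$ positive-commutator argument you describe at the end. The paper itself remarks, just before stating the conjecture, that Conjecture~\ref{conj:attractor} would imply Conjecture~\ref{conj:mass} ``at least for~\eqref{eq:limit_mass}'', which is exactly the distinction you draw: the local mass bound follows from weak $H^1$ convergence plus Rellich compactness, whereas the local kinetic bound~\eqref{eq:limit_kinetic} would require strong local $H^1$ convergence along the extracting subsequence and hence an additional dispersive/RAGE-type input beyond Conjecture~\ref{conj:attractor}. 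Since the paper offers no proof of the conjecture, there is nothing further to compare; your diagnosis of both the conditional reduction and the remaining obstacles is accurate.
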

The upper bound $\gamma_c Z^3$ is certainly not optimal here. In physical terms, the conjecture says that whatever the number of electrons we start with (and whatever their kinetic energy), we will always end up with at most $\gamma_c Z$ electrons having a universally bounded total kinetic energy. The other electrons have to scatter because the attraction of the nucleus with positive charge $Z$ is not strong enough to keep all the electrons in its neighborhood. It could be that proving the weaker Conjecture~\ref{conj:mass} is not much easier than proving the stronger Conjecture~\ref{conj:attractor}. We actually have very little information on $\gamma_c$. 

In this paper, we are interested in Conjecture~\ref{conj:mass}. We will prove a time-averaged version of~\eqref{eq:limit_mass}, with $\gamma_c$ replaced by $2$ in the radial case, and by $4$ in the general case. Our main result is as follows.

\begin{theorem}[Long-time behavior of atoms in Hartree theory]\label{thm:Hartree}
Suppose $Z>0$, let $u_0$ be an arbitrary initial datum in $H^1(\R^3)$, and denote by $u(t)$ the unique solution of~\eqref{eq:Hartree-equation}.  Then, for any $R > 0$, we have the following estimate 
\begin{equation}
\frac1{T} \int_0^Tdt\;\int_{\R^3}dx\,\frac{|u(t,x)|^2}{1+|x|^2/R^2}\leq 4Z+\frac{3}R+\frac{2\,\sqrt{KN}R^2}{ZT}
\label{eq:estimate_average}
\end{equation}
with
$$N:=\int_{\R^3}|u_0|^2$$
and
\begin{equation}
K:=\sup_{t\geq0}\int_{\R^3}|\nabla u(t)|^2\leq {Z^2N+ 2\norm{\nabla u_0}^2_{L^2(\R^3)}+N^3\norm{\nabla u_0}_{L^2(\R^3)}}.
\label{eq:unif_K} 
\end{equation}
In particular, we have 
\begin{equation}
{\limsup_{T\to\infty}\frac1{T} \int_0^Tdt\int_{|x|\leq r}dx\,|u(t,x)|^2 \leq 4Z}
\label{eq:limsup}
\end{equation}
for every $r>0$. Similarly, we have the following estimate on the local kinetic energy
\begin{multline}
\frac1{T} \int_0^T\!dt\int_{\R^3}dx\,\frac{|\nabla u(t,x)|^2}{(1+|x|/R)^2}\leq \left(\frac{Z^2}{4}+\frac{2Z}R+\frac{3Z}{R^2}\right)\frac1{T} \int_0^Tdt\;\int_{\R^3}dx\frac{|u(t,x)|^2}{1+|x|^2/R^2}\\+\frac{2R\sqrt{K}\sqrt{N}}{T}
\label{eq:estimate_average_kinetic}
\end{multline}
and therefore
\begin{equation}
{\limsup_{T\to\infty}\frac1{T} \int_0^Tdt\int_{|x|\leq r}dx\, | \nabla u(t,x)|^2 \leq Z^3}
\label{eq:limsup_K}
\end{equation}
for every $r>0$.

If the initial datum $u_0=u_0(|x|)$ is radial, then $u(t)$ is radial for all times and the same estimate~\eqref{eq:estimate_average} holds true with $4Z$ replaced by $2Z$. Similarly, the estimate \eqref{eq:limsup_K} holds true with $Z^3$ replaced by $Z^3/2$. 
\end{theorem}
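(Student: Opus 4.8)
The heart of the proof is a new monotonicity formula obtained by testing the Hartree flow against a truncated version of the cubic weight $|x|^3$. Fix $R>0$ and let $a_R\colon\R^3\to\R$ be the smooth radial convex function with $a_R'(r)=\dfrac{3r^2}{1+r^2/R^2}$, so that $a_R(x)\approx|x|^3$ for $|x|\lesssim R$, $|\nabla a_R|\leq 3R^2$ everywhere, and, crucially, $\nabla a_R(x)\cdot\nabla\dfrac1{|x|}=-\dfrac{3}{1+|x|^2/R^2}$. Associated to it is the Morawetz-type functional $M_R(t):=2\,\mathrm{Im}\int_{\R^3}\overline{u(t,x)}\,\nabla a_R(x)\cdot\nabla u(t,x)\,dx$. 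Since $|\nabla a_R|\leq 3R^2$, the Cauchy--Schwarz inequality and the a priori bound \eqref{eq:unif_K} give $|M_R(t)|\leq 6R^2\sqrt{KN}$ for every $t$; thus $M_R$ is bounded uniformly in time. (The bound \eqref{eq:unif_K} itself follows from conservation of $\cE_Z$, the estimate $\int_{\R^3}|u|^2/|x|\leq\norm{u}_{L^2}\norm{\nabla u}_{L^2}$, and a Gagliardo--Nirenberg/Hardy--Littlewood--Sobolev bound for the interaction term.)

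\textbf{The monotonicity identity.} Differentiating $M_R$ along \eqref{eq:Hartree-equation} and integrating by parts repeatedly produces the Hartree analogue of the classical Morawetz identity:
\[
\dot M_R(t)=4\int_{\R^3}(\mathrm{Hess}\,a_R)(\nabla\overline u,\nabla u)\,dx-\int_{\R^3}(\Delta^2 a_R)|u|^2\,dx-6Z\int_{\R^3}\frac{|u(t,x)|^2}{1+|x|^2/R^2}\,dx+\mathcal N_R(t),
\]
the third term being exactly $-6Z\,w(t)$ with $w(t):=\int_{\R^3}|u(t,x)|^2/(1+|x|^2/R^2)\,dx$ the weighted local mass, by the normalisation of $a_R$ — this is the whole reason for using the cubic weight rather than $|x|$, which would yield the harder term $\int|u|^2/|x|^2$ — and the electronic repulsion contributing, after symmetrisation,
\[
\mathcal N_R(t)=\int_{\R^3}\!\int_{\R^3}\frac{\bigl(\nabla a_R(x)-\nabla a_R(y)\bigr)\cdot(x-y)}{|x-y|^3}\,|u(t,x)|^2\,|u(t,y)|^2\,dx\,dy\ \geq\ 0,
\]
nonnegative by convexity of $a_R$.

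\textbf{Two key estimates and the linearisation.} First, the first two terms of the identity are essentially nonnegative. For the model weight $\mathrm{Hess}(|x|^3)=3|x|(\mathrm{Id}+\hat x\otimes\hat x)$ and $\Delta^2(|x|^3)=24/|x|$, and the sharp one-dimensional Hardy inequality $\int_{\R^3}|x|\,|\partial_r u|^2\geq\int_{\R^3}|u|^2/|x|$ (optimal constant $1$) gives $4\int(\mathrm{Hess}\,|x|^3)(\nabla\overline u,\nabla u)-24\int|u|^2/|x|\geq 0$ — in the radial case, where $(\mathrm{Hess}\,|x|^3)(\nabla\overline u,\nabla u)=6|x|\,|\partial_r u|^2$, one applies Hardy once; in the general case, twice (also using $\int|x||\nabla u|^2\geq\int|u|^2/|x|$). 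For the truncated $a_R$ this holds with an error $O(1/R)$. Second, the repulsion is quadratically large in $w$: the kernel $\dfrac{3(|x|x-|y|y)\cdot(x-y)}{|x-y|^3}$ satisfies the pointwise bound $\geq\dfrac32\,\dfrac{|x|+|y|}{|x-y|}\geq\dfrac32$, with equality when $|x|=|y|$, and — this is the source of the radial improvement — its average over the angle between $x$ and $y$ equals exactly the constant $3$ (a direct computation using Newton's theorem). Hence $\mathcal N_R(t)\geq\dfrac32\,w(t)^2$ in general and $\geq 3\,w(t)^2$ for radial $u$, up to $O(1/R)$ from the tails $|x|\gtrsim R$. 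Combining the two estimates, $\dot M_R\geq\dfrac32 w^2-6Zw-O(1/R)$; the decisive elementary step is $\dfrac32 w^2-6Zw=\dfrac32(w-4Z)^2+6Zw-24Z^2\geq 6Zw-24Z^2$, which \emph{linearises} the inequality to $\dot M_R(t)\geq 6Z\,w(t)-24Z^2-O(1/R)$. Integrating over $[0,T]$ and using $|M_R(T)-M_R(0)|\leq 12R^2\sqrt{KN}$ gives \eqref{eq:estimate_average} (the $3/R$ absorbs the $O(1/R)$ terms); the radial case uses $3(w-2Z)^2\geq 0$ instead and yields $2Z$. The $\limsup$ statements \eqref{eq:limsup}, \eqref{eq:limsup_K} follow by letting $T\to\infty$ then $R\to\infty$, after $\int_{|x|\leq r}|u|^2\leq(1+r^2/R^2)\,w(t)$. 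The local kinetic-energy estimate \eqref{eq:estimate_average_kinetic} is obtained from a localised energy identity — testing \eqref{eq:Hartree-equation} against $\chi_R^2\overline u$, integrating, discarding the nonnegative interaction term, and using the sharp hydrogenic bound $-\Delta-Z/|x|\geq-Z^2/4$ to trade $Z\int\chi_R^2|u|^2/|x|$ for $\tfrac{Z^2}{4}\int\chi_R^2|u|^2$ plus $O(1/R)$ — combined with the mass bound just proved.

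\textbf{Main obstacle.} The substance of the argument lies entirely in the two key estimates: the optimal constant $1$ in the weighted Hardy inequality is exactly what allows the $24/|x|$ singularity of $\Delta^2(|x|^3)$ to be swallowed by the Hessian term, and the kernel constants $\tfrac32$ (general) and $3$ (radial) are exactly what the tangent-line step converts into $4Z$ and $2Z$ — any slack there degrades the conclusion. One must also choose the truncation so these survive with only $O(1/R)$ loss, and make $M_R$ genuinely $C^1$ in $t$ and justify the Morawetz identity for merely $H^1$ data, which is done by regularising the initial datum and passing to the limit.
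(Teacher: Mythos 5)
Your overall strategy is the same as the paper's: a virial/Morawetz identity for a truncated cubic weight (your $a_R$ is exactly $3f_R$ with $f_R$ built from \eqref{eq:choice_f}), positivity of the Hessian-minus-bi-Laplacian part via a weighted Hardy inequality, a repulsion term bounded below quadratically in the weighted local mass with constants $\tfrac12$ (general) and $1$ (radial), the uniform bound $|\langle u,A_{a_R}u\rangle|\leq 6R^2\sqrt{KN}$, and the $K$-bound from energy conservation; your tangent-line linearisation is a harmless substitute for the paper's Jensen-plus-quadratic-formula step. The first genuine gap is the claim that $\mathcal N_R(t)\geq \tfrac32 w(t)^2$ (resp. $3w(t)^2$ in the radial case) follows from the pure $|x|^3$ computation ``up to $O(1/R)$ from the tails''. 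That reduction is not valid: for $|x|\sim R$ the truncated weight $a_R'(r)=3r^2/(1+r^2/R^2)$ differs from $3r^2$ by a multiplicative factor of order one (a factor $2$ at $r=R$), and the part of $w(t)$ carried by $|x|\gtrsim R$ is not $O(1/R)$ --- it can be a fixed fraction of $N$. What your argument actually requires is the pointwise inequality $\big(a_R'(|x|)\omega_x-a_R'(|y|)\omega_y\big)\cdot(x-y)\,|x-y|^{-3}\geq \tfrac32\,(1+|x|^2/R^2)^{-1}(1+|y|^2/R^2)^{-1}$, together with its angular-average analogue for the radial improvement; this is exactly Lemma~\ref{lem:f} of the paper, and it is a separate computation in which the angular minimum sits at $\theta=\pm1$ and the aligned case $\theta=1$ relies on the special identity $f'(r)-f'(s)=(r^2-s^2)\tfrac{f'(r)}{r^2}\tfrac{f'(s)}{s^2}$ for $f'(r)=r^2/(1+r^2)$ --- none of which is visible from the untruncated kernel (your pointwise bound $\geq\tfrac32\tfrac{|x|+|y|}{|x-y|}$ for $|x|^3$ is correct but does not transfer). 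A smaller imprecision of the same kind: the truncation error in the Hessian/bi-Laplacian step is of size $O(w(t)/R)$, not an absolute $O(1/R)$; it does land harmlessly in the linear coefficient (as the paper's $-\tfrac3R M_R$ term), but it must be tracked as such.

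The second gap is the local kinetic-energy bound \eqref{eq:estimate_average_kinetic}. Multiplying \eqref{eq:Hartree-equation} by $\chi_R^2\bar u$ and integrating gives, on taking imaginary parts, only local mass conservation, and on taking real parts an identity in which the localized kinetic energy is balanced against a term of the form $\operatorname{Im}\int\chi_R^2\,\bar u\,\partial_t u$; this term is neither sign-definite nor the time derivative of a quantity bounded by $R\sqrt{KN}$, so averaging over $[0,T]$ produces no $O(1/T)$ boundary term, and the hydrogenic bound $-\Delta-Z/|x|\geq -Z^2/4$ cannot be inserted inside the cutoff without commutator corrections. The paper instead runs a second virial identity with the weight $g_R(|x|)=R^2g(|x|/R)$, $g(r)=r-\log(1+r)$ (so $g_R\sim|x|^2$ locally and $\sim|x|$ at infinity), whose Hessian term produces the localized kinetic energy, and controls the attractive Coulomb term by a localized Hardy trick with the optimized parameter $\alpha=Z/2$; this is precisely where the coefficient $Z^2/4$ and the boundary term $2R\sqrt{KN}/T$ in \eqref{eq:estimate_average_kinetic} come from, and your sketch has no counterpart for either mechanism. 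The mass estimate \eqref{eq:estimate_average}, the $\limsup$ statements, and the bound \eqref{eq:unif_K} are otherwise set up correctly, but both of the above steps need genuine proofs (or replacement by the paper's Lemma~\ref{lem:f} and Step~3) before the theorem is established.
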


Note that we do not exactly get that the limiting mass is $\leq 4Z$ for large times, but we only know it in the sense of \emph{time averages} of the form $\langle f \rangle_T = T^{-1}\int_0^T f\,dt$. Such a statement is reminiscent of the celebrated RAGE theorem~\cite{Ruelle-69,AmrGeo-73,Enss-78,ReeSim3} for linear time evolutions generated by self-adjoint operators. The constants in the error terms of~\eqref{eq:estimate_average} and~\eqref{eq:estimate_average_kinetic} are probably not optimal at all, but they are displayed here to emphasize that our method can provide simple and explicit bounds. However, we have not tried to optimize these constants too much.

In the radial case, we are able to get the same numerical value of $2$ as the best known estimate on $\gamma_c$. However, we use a virial-type argument that seems to be quite different from Lieb's celebrated proof in~\cite{Lieb-84} in the stationary case (which, for radial solutions, goes back to Benguria~\cite{Benguria-79}). In particular, our approach provides an alternative proof of the fact that $\gamma_c<2$ in the stationary radial case.

\subsection*{Strategy of the Proof}
Now, we explain the main ideas used in the proof of Theorem~\ref{thm:Hartree}. To this end, we start by quickly recalling Lieb's proof~\cite{Lieb-84} that $\gamma_c<2$ holds. His idea is to take the scalar product of the stationary Hartree equation~\eqref{eq:Hartree-stationary} with $|x|u(x)$, leading to the estimate
$$\pscal{u,\frac{|x|(-\Delta)+(-\Delta)|x|}{2}u}-ZN+\int_{\R^3}\int_{\R^3}\frac{(|x|+|y|)|u(x)|^2|u(y)|^2}{2|x-y|}\, dx\,dy\leq 0,$$
using that $\lambda\leq0$ holds. To conclude, it suffices to notice that we have
$$\frac{|x|(-\Delta)+(-\Delta)|x|}2=|x|^{1/2}\left(-\Delta-\frac{1}{4|x|^2}\right)|x|^{1/2}>0,$$
by Hardy's inequality, and that
$$\frac{|x|+|y|}{|x-y|}\geq 1$$
by the triangle inequality. Combining these estimates, we obtain that $-ZN+N^2/2< 0$, which implies the bound $N< 2Z$ for the stationary problem \eqref{eq:Hartree-stationary}. (Note that the inequality is strict, since there is no optimizer in Hardy's inequality.)

In view of Lieb's argument for the stationary problem \eqref{eq:Hartree-stationary}, it appears to be a viable strategy in the time-dependent setting to consider the quantity $M(t) = \int |x| |u(t,x)|^2 \, dx$ (or some spatially localized version thereof). Indeed, if we take the second time derivative of $M(t)$, we are (formally) led to the well-known {\em Morawetz--Lin--Strauss estimate} for nonlinear Schr\"odinger equations (NLS), which has been proved of enormous value in the setting of NLS with purely repulsive interactions. However, due to presence of the attractive term $-Z/|x|$ with $Z> 0$ in the Hartree equation \eqref{eq:Hartree-equation}, the use of the classical Morawetz--Lin--Strauss bounds does not yield any dispersive information about $u(t,x)$, even in the case when $N$ is large compared to $Z$. 

In our situation, it turns out that it is more natural to study the time evolution of the third moment $M(t) = \int |x|^3 |u(t,x)|^2 \, dx$.
If we compute its second time derivative, we obtain
\begin{align*}
\frac13 \frac{d^2}{dt^2} \int_{\R^3}|x|^3|u(t,x)|^2\,dx&=\frac{d}{dt}\pscal{u(t),Au(t)}\\
&=2\Re\pscal{\frac{\partial}{\partial t}u(t),Au(t)}\\
&=\pscal{u(t),i[-\Delta,A]u(t)}+\pscal{u(t),i[V_u,A]u(t)} 
\end{align*}
with 
$A:=-i\big(\nabla\cdot x\,|x|+|x|\,x\cdot \nabla\big)$
and $V_u=-Z|x|^{-1}+|u|^2\ast|x|^{-1}$. This is the same as multiplying the time-dependent equation~\eqref{eq:Hartree-equation} by $A\overline{u(t)}$ and taking the imaginary part.
Our key observation is the positivity of the commutator
\begin{equation}
i[-\Delta,A]=-\frac13\big[\Delta,\big[\Delta,|x|^3\big]\big]\geq0
\label{eq:idea_positive_comm} 
\end{equation}
(see also \eqref{eq:double-comm-powers} below), combined with the fact that
\begin{align*}
\pscal{u(t),i[V_u,A]u(t)}  & = -2\int_{\R^3}|x|x\cdot\nabla V_u(x) |u(t,x)|^2\\
&=\int_{\R^3}\int_{\R^3}(|x|x-|y|y)\cdot\frac{x-y}{|x-y|^3}|u(t,x)|^2|u(t,y)|^2dx\,dy-2ZN\\
&\geq \kappa\, N^2-2 ZN,
\end{align*}
where $\kappa=1$ if $u(t)$ is radial and $\kappa=1/2$ otherwise (see Lemma~\ref{lem:estim_potential_x_3} below). Hence, when $N>2Z/\kappa$, we deduce the lower bound
\begin{equation} \label{ineq:virial}
\frac13\frac{d^2}{dt^2}\int_{\R^3}|x|^3|u(t,x)|^2\,dx=\frac{d}{dt}\pscal{u(t),Au(t)}\geq N(\kappa N-2Z)>0 .
\end{equation}
Therefore the quantity $\int_{\R^3}|x|^3|u(t,x)|^2\,dx$ grows at least like $t^2$ for large $t$ and in particular $\pscal{u(t),Au(t)}$ is a monotone increasing quantity. This growth is a strong indication that some dispersion takes place and some particles have to escape to infinity. (A regularized version of the previous estimate will indeed show this claim for any $H^1$-solution.) Note also that, in the time-independent case when $u$ is a nonlinear bound state (and hence the left side in \eqref{ineq:virial} must be zero), this is also a new proof of Lieb's inequality $\gamma_c<2$ in the radial setting, since $\kappa =1$ holds under this symmetry assumption.

Let us generally remark that virial or positive commutator arguments are very common in the literature~\cite{KilVis-08,ColKelStaTakTao}. When $|x|^3$ is replaced by $|x|$ this leads to the famous Morawetz inequalities~\cite{Morawetz-68} as already mentioned, whereas the case of $|x|^2$  gives the virial identity used by Glassey in~\cite{Glassey-77b} to prove finite-time blowup for NLS. In a recent work~\cite{Tao-08}, Tao advocated the use of $|x|^4$ for some nonlinear Schr\"odinger equations in dimension $d\geq7$, in order to get a universal bound on the mass of the solution. We are not aware of any use of the multiplier $|x|^3$ in the literature.

In fact, using the cubic weight $|x|^3$ is rather natural from a dimensional point of view in our situation: If the potential term $[V_u,A]$ should be $O(1)$, then the virial function must behave like the third power of a length to compensate the Laplacian and the Coulomb potential. 

For the proof of our main result, we will in fact derive a whole class of double commutator estimates of the same kind as~\eqref{eq:idea_positive_comm}, which we think is of independent interest too. In particular, we will show in~\eqref{eq:double-comm-powers} below that, in any dimension $d\geq1$, we have the commutator bound
\begin{equation}
-\big[\Delta,\big[\Delta,|x|^\beta\big]\big]\geq \beta(\beta+d-4)(d-\beta)|x|^{\beta-4},
\label{eq:general_dble_comm} 
\end{equation}
provided that $\beta\geq \max(1,4-d)$. Note that the right side is $\geq 0$ when $\beta\leq d$. In spite of the fact that~\eqref{eq:general_dble_comm} turns out to be equivalent to a general version of Hardy's inequality, we have not found it explicitly written (let alone systematically treated) in the literature. Notice that the bound~\eqref{eq:general_dble_comm} contains the usual inequalities for $\beta=1,2$, as well as Tao's estimate for $\beta=4$. In the present application, we shall use \eqref{eq:general_dble_comm} in dimension $d=3$ with $\beta=3$, or rather a regularized version thereof. However, let us remark that the positivity of this commutator does not directly follow as in the ``classical'' cases when $\beta = 1,2$. To wit this, we note that, for $d=\beta=3$, a calculation (which will be detailed below) yields the identity
\begin{align*}
-\big [\Delta, \big [\Delta, |x|^3 \big ] \big]  & = -\Delta \Delta |x|^3 - \nabla \cdot ( \mathrm{Hess}_{|x|^3} ) \nabla  \\ & = -\frac{24}{|x|} - 12 \nabla \cdot  \left [ |x| \left ( 1 + \omega_x \omega_x^T \right ) \right ] \nabla,  
\end{align*}
where $\omega_x = \frac{x}{|x|}$ denotes the unit vector in direction $x \in \R^3$. Obviously, the first term on the right side is negative definite. Nevertheless, when combined with the second term, the generalized Hardy's inequality (see \eqref{eq:Hardy-power} below) shows that we indeed have that the whole right-hand side is non negative, and hence the estimate \eqref{eq:general_dble_comm} follows in the particular case $d=\beta=3$. 

Ultimately, we are interested in general $H^1$-solutions $u(t)$ without imposing any spatial weight condition. Therefore, the strategy of proving Theorem \ref{thm:Hartree} explained above needs to be further refined.  In particular, the desired bound~\eqref{eq:estimate_average} on a ball of radius $R$ cannot be obtained by only looking at the second derivative of the third moment as we have just explained. Our method to extend~\eqref{eq:estimate_average} to any $H^1$-valued solution $u(t)$ is to replace the function $|x|^3$ by a radial function $f_R(|x|)$ which behaves like $|x|^3$ on the ball of radius $R$ and like $|x|$ at infinity. This will imply that $A_{f_R} = -i[\Delta, f_R]$ defines a bounded operator on $L^2(\R^3)$. Furthermore, we will need to derive a sufficiently good lower bound on the double commutator $-[\Delta,[\Delta,f_R]]$ in order to imitate the previous argument on the ball only. In Section \ref{sec:commutators}, we explain how to do this for a general function $f$. 
Finally, the bound~\eqref{eq:estimate_average_kinetic} on the local kinetic energy is itself obtained by considering another virial function $g_R$ which behaves like $|x|^2$ on the ball of radius $R$ and like $|x|$ at infinity.
The complete proof of Theorem~\ref{thm:Hartree} is given in Section~\ref{sec:proof}.

\subsection*{Extensions: Hartree-Fock and Many-Body Schr\"odinger Theory}
In physical reality, electrons are fermions, which means that the many-body wave function $\Psi=\Psi(t,x_1,...,x_N)$ in~\eqref{eq:many-body} must be \emph{antisymmetric} with respect to exchanges of its spatial variables $x_1,...,x_N$. The Hartree state $\psi(t,x_1)\cdots\psi(t,x_N)$ is symmetric and it is therefore not allowed for physical electrons. This is why one speaks about \emph{bosonic} atoms. The simplest product-like antisymmetric wave function is a \emph{Hartree-Fock state} sometimes also called a Slater determinant
$$\Psi(t,x_1,...,x_N)=\frac{1}{\sqrt{N!}}\sum_{\sigma\in\cS_N}\varepsilon(\sigma)\,u_1(t,x_{\sigma(1)})\cdots u_N(t,x_{\sigma(N)}),\qquad \pscal{u_j,u_k}_{L^2}=\delta_{jk}.$$
In Section~\ref{sec:Hartree-Fock} below, we extend Theorem~\ref{thm:Hartree} to the corresponding time-dependent Hartree-Fock equations; see Theorem \ref{thm:Hartree-Fock} for a precise statement. Finally, we also consider the full many-body Schr\"odinger equation~\eqref{eq:many-body} in Section~\ref{sec:many-body} below, where our findings are summarized in Theorem \ref{thm:many-body}.

\section{Estimating the Commutator $-[\Delta,[\Delta,f(x)]]$}\label{sec:commutators}

Throughout this section, we use the convenient notation 
$$p :=-i\nabla,$$
and in particular we have $p^2 = -\Delta$ in what follows. In this section, we investigate how to get lower bounds for a double commutator of the form $-[p^2,[p^2,f(x)]]$ in general space dimensions $d \geq 1$.  Such a double commutator always arises when computing the second derivative of the expectation value of $f(x)$, in a non-relativistic system based on the Laplacian. We always assume that $f$ is smooth enough (possibly only outside of the origin), such that the double commutator can be at least properly interpreted as a quadratic form on $C^\ii_c(\R^d)$ or on $C^\ii_c(\R^d\setminus\{0\})$.

Our starting point is the well-known formula for the double commutator, which follows from a tedious but simple calculation:
\begin{equation}
-\big[p^2\,,\,[p^2,f(x)]\,\big]=-(\Delta\Delta f)(x)+4 p\cdot \big({\rm Hess}\,f(x)\big)\,p.
\label{eq:dble_comm}
\end{equation}
Since the Hessian of $f$ appears on the right side, it is natural to restrict to convex functions $f$. Then the second term is non-negative in the sense of operators. 
One can use this term to control the bi-Laplacian of $f$ by resorting to Hardy's trick, which is based on writing
\begin{align}
p\cdot \big({\rm Hess}\,f(x)\big)\,p&=\big(p+iF(x)\big)\cdot \big({\rm Hess}\,f(x)\big)\,\big(p-iF(x)\big)\nonumber\\
&\quad+i\left(p\cdot \big({\rm Hess}\,f(x)\big) F(x)- F(x)\cdot \big({\rm Hess}\,f(x)\big) p\right)\nonumber\\
&\quad -F(x)\cdot \big({\rm Hess}\,f(x)\big) F(x)\nonumber\\
&\geq {\rm div}\left({\rm Hess}\,f(x) F(x)\right) - F(x)\cdot \big({\rm Hess}\,f(x)\big) F(x)
\label{eq:Hardy-trick}
\end{align}
for any sufficiently smooth real vector field $F:\R^3\to\R^3$. Here we have only used that 
$\big(p+iF(x)\big)\cdot \big({\rm Hess}\,f(x)\big)\,\big(p-iF(x)\big)\geq0$ holds, which simply follows from the assumed convexity ${\rm Hess}\,f(x)\geq0$ and the self-adjointness $(p+iF(x))^*=p-iF(x)$. 
For dimensional reasons, it is natural to take $F$ of the form $F(x)=\alpha x|x|^{-2}$ with some constant $\alpha\in\R$. We thus obtain the lower bound
\begin{align}
-\big[p^2\,,\,[p^2,f(x)]\big]&=4\left(p+i\alpha\frac{x}{|x|^2}\right)\cdot \big({\rm Hess}\,f(x)\big)\,\left(p-i\alpha\frac{x}{|x|^2}\right)\nonumber\\
&\quad+4\alpha\, {\rm div}\left({\rm Hess}\,f(x)\frac{x}{|x|^2}\right)
-4\alpha^2\frac{x^T\big({\rm Hess}\,f(x)\big)x}{|x|^4}-(\Delta\Delta f)(x)\nonumber\\
&\geq 4\alpha\, {\rm div}\left({\rm Hess}\,f(x)\frac{x}{|x|^2}\right)
-4\alpha^2\frac{x^T\big({\rm Hess}\,f(x)\big)x}{|x|^4}-(\Delta\Delta f)(x)
\label{eq:general-commutator-estimate}
\end{align}
for a sufficiently smooth convex function $f$ and any $\alpha\in\R$. Note that by using Hardy's trick we are able to obtain a lower bound which does not contain the differential operator $p$. Our estimate only involves a multiplication operator. By varying $\alpha$, we can try to make the negative part of this function as small as possible.

Let us now restrict ourselves to a radial function $f(|x|)$ and use the notation $r=|x|$ and $\omega_x:=x/|x|$ for simplicity. Some tedious calculations show that
$${\rm Hess\ }f(|x|)=(1-\omega_x\omega_x^T)\frac{f'(r)}{r}+\omega_x\omega_x^Tf''(r),$$
\begin{equation*}
{\rm div}\left(\big({\rm Hess}\,f(x)\big)\frac{x}{|x|^2}\right)={\rm div}\left(\frac{f''(r)}{r}\omega_x\right)=\frac{f^{(3)}(r)}{r}+(d-2)\frac{f''(r)}{r^2},
\end{equation*}
\begin{equation*}
\frac{x^T\big({\rm Hess}\,f(x)\big)x}{|x|^4}=\frac{f''(r)}{r^2}.
\end{equation*}
Moreover, we recall the formula for the Bi-Laplacian of a radial function:
$$\Delta\Delta f(|x|)=f^{(4)}(r)+2(d-1)\frac{f^{(3)}(r)}{r}+(d-1)(d-3)\frac{f^{(2)}(r)}{r^2}-(d-1)(d-3)\frac{f'(r)}{r^3}.$$
Therefore we can rewrite the equality in \eqref{eq:general-commutator-estimate} for a radial function $f$ as follows:
\begin{align}
-\big[p^2\,,\,[p^2,f(|x|)]\big]=&4\left(p+i\alpha\frac{\omega_x}{r}\right)\cdot\left((1-\omega_x\omega_x^T)\frac{f'(r)}{r}+\omega_x\omega_x^Tf''(r)\right)\,\left(p-i\alpha\frac{\omega_x}{r}\right)\nonumber\\
& -f^{(4)}(r)+4\left(\alpha-\frac{d-1}{2}\right)\frac{f^{(3)}(r)}{r}\nonumber\\
&+4\left(\alpha(d-2)-\alpha^2-\frac{(d-1)(d-3)}4\right)\frac{f''(r)}{r^2}\nonumber\\
&+(d-1)(d-3)\frac{f'(r)}{r^3}.
\label{eq:radial-commutator-estimate}
\end{align}
The operator on the first line is $\geq0$ when $x\mapsto f(|x|)$ is convex. In dimension $d=3$, we already get a simple estimate.

\begin{lemma}[A lower bound for $d=3$]
Let $f:[0,\ii)\to\R$ be a convex non-decreasing function such that $x\mapsto f^{(4)}(|x|)\in L^1_{\rm loc}(\R^3)$. Then we have
\begin{align}
-\big[p^2\,,\,[p^2,f(|x|)]\big] &= 4\left(p+i\frac{\omega_x}{r}\right)\cdot\left((1-\omega_x\omega_x^T)\frac{f'(r)}{r}+\omega_x\omega_x^Tf''(r)\right)\,\left(p-i\frac{\omega_x}{r}\right)\nonumber\\
&\qquad-f^{(4)}(|x|)\nonumber\\
&\geq -f^{(4)}(|x|)
\label{eq:simple-bound-1D-3D}
\end{align}
in the sense of quadratic forms on $C^\ii_c(\R^3)$.
\end{lemma}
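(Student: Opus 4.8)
The plan is to specialize the general radial identity \eqref{eq:radial-commutator-estimate} to the dimension $d=3$, where several of the error terms collapse, and then to argue that the leftover first-order differential operator is still nonnegative under the stated hypotheses on $f$.

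First I would set $d=3$ in \eqref{eq:radial-commutator-estimate} and choose the free parameter $\alpha=1=(d-1)/2$. With this choice the coefficient $4(\alpha-(d-1)/2)$ of $f^{(3)}(r)/r$ vanishes; the coefficient $4(\alpha(d-2)-\alpha^2-(d-1)(d-3)/4)$ of $f''(r)/r^2$ becomes $4(1-1-0)=0$; and the last term $(d-1)(d-3)f'(r)/r^3$ vanishes because $d-3=0$. Hence \eqref{eq:radial-commutator-estimate} reduces precisely to
\begin{equation*}
-\big[p^2\,,\,[p^2,f(|x|)]\big] = 4\left(p+i\frac{\omega_x}{r}\right)\cdot\left((1-\omega_x\omega_x^T)\frac{f'(r)}{r}+\omega_x\omega_x^Tf''(r)\right)\,\left(p-i\frac{\omega_x}{r}\right)-f^{(4)}(|x|),
\end{equation*}
which is exactly the claimed identity \eqref{eq:simple-bound-1D-3D}.

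It then remains to check that the first operator on the right is nonnegative as a quadratic form on $C^\ii_c(\R^3)$. This is the point already flagged in the derivation of \eqref{eq:Hardy-trick}: the operator has the form $(p+iF)\cdot \mathrm{Hess}\,f\,(p-iF)$ with $F(x)=\omega_x/r=x/|x|^2$, and since $(p-iF)^*=p+iF$, its expectation value on any $\phi\in C^\ii_c(\R^3)$ equals $\langle (p-iF)\phi,\ \mathrm{Hess}\,f\ (p-iF)\phi\rangle$, which is $\geq 0$ provided the matrix $\mathrm{Hess}\,f(|x|)=(1-\omega_x\omega_x^T)f'(r)/r+\omega_x\omega_x^T f''(r)$ is positive semidefinite pointwise. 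Its eigenvalues are $f''(r)$ (in the radial direction $\omega_x$) and $f'(r)/r$ (with multiplicity $d-1$, on the orthogonal complement), so positivity is equivalent to $f''\geq 0$ and $f'\geq 0$, i.e.\ exactly the hypotheses that $f$ is convex and non-decreasing. The regularity assumption $f^{(4)}(|x|)\in L^1_{\rm loc}(\R^3)$ is what makes the right-hand side a well-defined quadratic form (the subtracted term being integrable against $|\phi|^2$), and also justifies the integrations by parts used to obtain \eqref{eq:radial-commutator-estimate} in the first place. Dropping the manifestly nonnegative first term yields the lower bound $-[p^2,[p^2,f]]\geq -f^{(4)}(|x|)$.

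The computation itself is routine given the formulas already assembled in the section; the only genuine subtlety — and the step I would treat most carefully — is the passage from the formal identity to a rigorous quadratic-form statement on $C^\ii_c(\R^3)$ near the origin, since $f', f'', f^{(3)}, f^{(4)}$ and the vector field $F(x)=x/|x|^2$ may all be singular at $x=0$. One should verify that each manipulation (the double-commutator formula \eqref{eq:dble_comm}, the completion-of-square rearrangement \eqref{eq:Hardy-trick}, and the reduction to the radial expressions) makes sense when tested against functions in $C^\ii_c(\R^3)$, using the local integrability of $f^{(4)}(|x|)$ together with the fact that the singular cross terms $i(p\cdot\mathrm{Hess}\,f\,F - F\cdot\mathrm{Hess}\,f\,p)$ combine into the total divergence $\mathrm{div}(\mathrm{Hess}\,f\,F)$, which for our $F$ is $f^{(3)}(r)/r+(d-2)f''(r)/r^2$ and hence controlled by the same $L^1_{\rm loc}$ hypothesis. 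Once this bookkeeping is in place, the lemma follows immediately.
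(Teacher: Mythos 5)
Your proposal is correct and follows essentially the paper's own route: the paper's entire proof is the one-line instruction to take $\alpha=1$ in \eqref{eq:radial-commutator-estimate}, which is precisely your computation (the vanishing of the $f^{(3)}/r$, $f''/r^2$ and $f'/r^3$ coefficients for $d=3$, $\alpha=1$), with the positivity of the completed-square term following from ${\rm Hess}\,f\geq0$ exactly as you argue.
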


\begin{proof}
Take $\alpha=1$ in~\eqref{eq:radial-commutator-estimate}.
\end{proof}

\medskip

Coming back to~\eqref{eq:radial-commutator-estimate} and taking now the convex function $f(|x|)=|x|^\beta$ with $\beta\geq1$, we obtain the following general result.
\begin{lemma}[Estimate on {$-[p^2,[p^2,|x|^\beta]]$}] \label{lem:lower}
For all $\beta\geq\max(1,4-d)$, we have
\begin{equation}
-[p^2,[p^2,|x|^\beta]] \;\geq\; \beta(\beta+d-4)(d-\beta)\;|x|^{\beta-4},
\label{eq:double-comm-powers}
\end{equation}
in the sense of quadratic forms on $C^\ii_c(\R^d)$ (resp. on $C^\ii_c(\R^d\setminus\{0\})$ if $\beta=4-d$). The right side of~\eqref{eq:double-comm-powers} is non negative for $\max(1,4-d)\leq \beta\leq d$.
\end{lemma}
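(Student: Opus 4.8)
The plan is to deduce Lemma~\ref{lem:lower} directly from the radial identity~\eqref{eq:radial-commutator-estimate} by choosing the free parameter $\alpha$ optimally for the monomial $f(r)=r^\beta$. First I would compute the relevant derivatives: $f'(r)=\beta r^{\beta-1}$, $f''(r)=\beta(\beta-1)r^{\beta-2}$, $f^{(3)}(r)=\beta(\beta-1)(\beta-2)r^{\beta-3}$, and $f^{(4)}(r)=\beta(\beta-1)(\beta-2)(\beta-3)r^{\beta-4}$, so that every term on the right-hand side of~\eqref{eq:radial-commutator-estimate} becomes an explicit multiple of $r^{\beta-4}$ (after the first, manifestly nonnegative, quadratic-form term is dropped using convexity of $x\mapsto|x|^\beta$, valid precisely for $\beta\ge1$). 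The coefficient of $r^{\beta-4}$ that remains is a polynomial $P(\alpha)$ which is a concave quadratic in $\alpha$ (the $\alpha^2$ term enters through $-4\alpha^2 f''(r)/r^2$ with a negative sign), so I would maximize over $\alpha\in\R$ by setting $P'(\alpha)=0$.

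The key computational step is then to evaluate $\max_\alpha P(\alpha)$ and check it equals $\beta(\beta+d-4)(d-\beta)$. Collecting terms, $P(\alpha) = -\beta(\beta-1)(\beta-2)(\beta-3) + 4\big(\alpha-\tfrac{d-1}{2}\big)\beta(\beta-1)(\beta-2) + 4\big(\alpha(d-2)-\alpha^2-\tfrac{(d-1)(d-3)}{4}\big)\beta(\beta-1) + (d-1)(d-3)\beta$. The optimal $\alpha_*$ is found from $4\beta(\beta-1)(\beta-2) + 4\beta(\beta-1)(d-2) - 8\alpha\beta(\beta-1)=0$, giving $\alpha_* = \tfrac{(\beta-2)+(d-2)}{2} = \tfrac{\beta+d-4}{2}$ (assuming $\beta\neq0,1$; the degenerate cases $\beta=1$ and the boundary $\beta=4-d$ are handled separately, the latter by restricting the form domain to $C^\infty_c(\R^d\setminus\{0\})$ since $|x|^{\beta-4}=|x|^{-d}$ is not locally integrable). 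Substituting $\alpha_*$ back and simplifying — this is the one genuinely tedious algebraic step — should collapse the whole expression to the claimed product $\beta(\beta+d-4)(d-\beta)\,|x|^{\beta-4}$. One can sanity-check this against the stated special cases: $\beta=1$ gives $1\cdot(d-3)(d-1)|x|^{-3}$, $\beta=2$ gives $2\cdot(d-2)\cdot(d-2)|x|^{-2}$, $\beta=4$ gives $4\cdot d\cdot(d-4)|x|^{0}$, and the advertised $d=\beta=3$ case gives $3\cdot2\cdot0=0$, matching the observation that the bi-Laplacian $-\Delta\Delta|x|^3 = -24/|x|$ is exactly cancelled.

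For the radial (hence $d\ge3$ here, or general $d$ with the monomial restriction) statement I would simply observe that the first-line operator in~\eqref{eq:radial-commutator-estimate}, namely $4(p+i\alpha_*\omega_x/r)\cdot\mathrm{Hess}(|x|^\beta)(p-i\alpha_*\omega_x/r)$, is a nonnegative quadratic form because $\mathrm{Hess}(|x|^\beta)=(1-\omega_x\omega_x^T)\beta r^{\beta-2}+\omega_x\omega_x^T\beta(\beta-1)r^{\beta-2}\ge0$ whenever $\beta\ge1$, and $(p+i\alpha_*\omega_x/r)^* = p-i\alpha_*\omega_x/r$. Dropping it yields exactly~\eqref{eq:double-comm-powers}. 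Finally I would note the sign of the right-hand side: the product $\beta(\beta+d-4)(d-\beta)$ has all three factors nonnegative precisely when $\beta\ge\max(1,4-d)$ and $\beta\le d$, giving the last sentence of the lemma.

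The main obstacle I anticipate is purely bookkeeping: verifying that $P(\alpha_*)$ simplifies cleanly to the factored form $\beta(\beta+d-4)(d-\beta)$ rather than some unfactored quartic-in-$\beta$ polynomial. The risk is a sign error or a missed term in transcribing~\eqref{eq:radial-commutator-estimate}; the cleanest safeguard is to expand $\beta(\beta+d-4)(d-\beta) = \beta\big[-\beta^2+(4)\beta+ (d^2-4d) - \beta(d-4)\cdots\big]$ symbolically and match coefficients of $1,d,d^2$ against $P(\alpha_*)$, and to cross-check the four special values $\beta\in\{1,2,3,4\}$ as above. No delicate analysis is needed — convexity of $|x|^\beta$ for $\beta\ge1$ and the self-adjointness trick already appear in~\eqref{eq:Hardy-trick}–\eqref{eq:radial-commutator-estimate}, and the only subtlety is the endpoint $\beta=4-d$ where one works on $C^\infty_c(\R^d\setminus\{0\})$.
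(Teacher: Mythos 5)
Your proposal is correct and follows essentially the same route as the paper, which simply takes $f(r)=r^\beta$ in~\eqref{eq:radial-commutator-estimate}, drops the nonnegative quadratic-form term by convexity, and optimizes over $\alpha$ with optimum $\alpha=(\beta+d-4)/2$; your algebra for $P(\alpha_*)$ indeed collapses to $\beta(\beta+d-4)(d-\beta)$. The only cosmetic difference is at the endpoint $\beta=4-d$, where the paper invokes a limiting argument and notes a positive $\delta$-measure at the origin invisible on $C^\ii_c(\R^d\setminus\{0\})$, while you handle it by restricting the form domain, which amounts to the same statement.
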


\begin{proof}
Take $f(r)=r^\beta$ in~\eqref{eq:radial-commutator-estimate} and optimize with respect to $\alpha$ (the optimum is $\alpha=(\beta+d-4)/2$).
We need $\beta\geq1$ to make sure that $f$ is non-decreasing and convex, and $\beta>4-d$ to ensure that all the terms are in $L^1_{\rm loc}(\R^d)$. For $\beta=4-d\geq1$, the right side of~\eqref{eq:radial-commutator-estimate} vanishes and the bound stays correct by a simple limit argument. We remark that, in the borderline case $\beta = 4-d$, there is a positive $\delta$-measure occurring at the origin $x=0$, which we do not see when using functions of $C^\ii_c(\R^d\setminus\{0\})$.
\end{proof}

\begin{remark}
Note also the special formula $-[p^2, [p^2, |x|^2]] = 8 p^2 \geq 0$ valid in any dimension $d \geq 1$, which immediately follows from \eqref{eq:dble_comm}. For $d \geq 3$ and $\beta=2$, the lower bound given in Lemma \ref{lem:lower} is then a direct consequence of Hardy's inequality $4p^2 \geq (d-2)^2 |x|^{-2}$. In fact, we shall see below that the bound in Lemma \ref{lem:lower} is equivalent to a generalized version of Hardy's inequality.  
\end{remark}

We conclude this section with some general observations as follows. First, we note that Lemma \ref{lem:lower} gives a non negative lower bound in~\eqref{eq:double-comm-powers} in dimension $d=2$ for the choice $\beta = 2$ only. In higher dimensions $d\geq3$, the right side is non negative for any $1\leq \beta\leq d$. When $\beta=4$, we get the simple lower bound
$$-[p^2,[p^2,|x|^4]] \;\geq\; 4d(d-4),\qquad \text{for $d\geq4$},$$
which was used for the first time by Tao in~\cite{Tao-08}.

As we have seen, the bound~\eqref{eq:double-comm-powers} is equivalent to the operator inequality
$$\left(p+i\alpha\frac{\omega_x}{r}\right)\cdot\left((1-\omega_x\omega_x^T)\frac{f'(r)}{r}+\omega_x\omega_x^Tf''(r)\right)\,\left(p-i\alpha\frac{\omega_x}{r}\right)\geq0$$
with $f(r)=r^\beta$. This can also be written for the optimal $\alpha=(\beta+d-4)/2$ as
$$\int_{\R^d}|x|^{\beta-2}\left(\left|P^\perp_x\nabla u(x)\right|^2+(\beta-1)\left|\omega_x\cdot\nabla u(x)+\frac{\beta+d-4}{2|x|}u(x)\right|^2\right)dx\geq0,$$
where $P_x^\perp=1-\omega_x\omega_x^T$ is the projection on the two-dimensional space orthogonal to $\omega_x$.
Saying that the second term is non negative is equivalent, for $\beta>1$, to the (generalized) Hardy inequality
\begin{equation}
\int_{\R^d}|x|^{\beta-2}|\omega_x\cdot \nabla u(x)|^2\,dx\geq \frac{(\beta+d-4)^2}{4}\int_{\R^d}|x|^{\beta-4}|u(x)|^2\,d x.
\label{eq:Hardy-power}
\end{equation}
Hence we see that~\eqref{eq:double-comm-powers} is nothing else but a reformulation of Hardy's inequality~\eqref{eq:Hardy-power}.

\begin{remark}[Fractional Laplacians]
Using the integral representation
$$x^\theta=\frac{\sin(\pi \theta)}{\pi}\int_0^\ii \frac{x}{x+s}\,s^{\theta-1}ds, \quad \mbox{for $0 < \theta < 1$}, $$
we can easily transpose most of our estimates to fractional powers $|p|^{2\theta} = (-\Delta)^\theta$ and $\langle p \rangle^{2\theta} = (|p|^2 +1)^{\theta}$ with $\theta \in (0,1)$. For instance, for the pseudo-relativistic kinetic energy operator $\sqrt{p^2+1}$, we have, at least formally,
\begin{multline*}
-\Big[\sqrt{1+p^2},\big[\sqrt{1+p^2},f(x)\big]\Big]=\frac{1}{\pi^2}\int_0^\ii \sqrt{s}\,ds\int_0^\ii \sqrt{t}\,dt\times\\
\times\frac{1}{(1+p^2+s)(1+p^2+t)}\Big(-\big[p^2,[p^2,f(x)]\big]\Big)\frac{1}{(1+p^2+s)(1+p^2+t)}.
\end{multline*}
In particular we find
\begin{equation*}
-\Big[\sqrt{1+p^2},\big[\sqrt{1+p^2},|x|^\beta\big]\Big]\geq0
\end{equation*}
for $\max(1,4-d)\leq\beta\leq d$. For a general convex radial function $f$ and in $d=3$ dimensions, we obtain the estimate
$$-\Big[\sqrt{1+p^2},\big[\sqrt{1+p^2},f(|x|)\big]\Big]\geq -\frac14\norm{f^{(4)}_+}_{L^\ii(\R^3)}$$
with $f^{(4)}_+$ denoting the positive part of $f^{(4)}$.
\end{remark}

\section{Proof of Theorem~\ref{thm:Hartree}}\label{sec:proof}

In this section, we provide the proof of our main result given by Theorem~\ref{thm:Hartree}. We always assume that the initial datum $u_0$ is smooth and decays fast enough, such that our calculations are justified. As we will see below, our estimates only involve the $H^1(\R^3)$ norm of $u_0$, and thus the general case can therefore be obtained by a simple limiting argument, which we do not detail here.

\subsection*{Step 1. The Virial Identity}

Consider a smooth radial convex function $f$. We define the corresponding virial operator
\begin{equation}
A_f:=p\cdot\nabla f+\nabla f\cdot p=p\cdot \omega_x f'(|x|)+f'(|x|)\omega_x\cdot p.
\label{eq:def_A_f} 
\end{equation}
Using Formula~\eqref{eq:simple-bound-1D-3D} of the previous section, we get
\begin{align}
\frac{d}{dt}\pscal{u(t),A_fu(t)}=&4\int_{\R^3}\frac{f'(|x|)}{|x|}|P^\perp_x\nabla u(t,x)|^2\,dx\nonumber\\
&+4\int_{\R^3}f''(|x|)\left|\omega_x\cdot\nabla u(t,x)+\frac{u(t,x)}{|x|}\right|^2\,dx\nonumber\\
&-\int_{\R^3}f^{(4)}(|x|)|u(t,x)|^2\,dx-2\int_{\R^3}f'(|x|)|u(t,x)|^2\omega_x\cdot \nabla V_{u}(t,x)\,dx,
\label{eq:first-Virial}
\end{align}
with
$$V_{u}(t,x)=-\frac{Z}{|x|}+|u(t)|^2\ast|x|^{-1}:=-\frac{Z}{|x|}+W_u(t,x).$$
The first potential term is just
$$-2\int_{\R^3}f'(|x|)|u(t,x)|^2\omega_x\cdot \nabla \left(-\frac{Z}{|x|}\right)\,dx=-2Z\int_{\R^3}\frac{f'(|x|)}{|x|^2}|u(t,x)|^2.$$
The second potential term can be expressed as follows
\begin{align*}
&-2\int_{\R^3}f'(|x|)|u(t,x)|^2\omega_x\cdot \nabla W_{u}(t,x)\,dx\\
&\qquad\qquad=2\int_{\R^3}\int_{\R^3}f'(|x|)\omega_x\cdot \frac{x-y}{|x-y|^3}|u(t,x)|^2|u(t,y)|^2\,dx\,dy\\
&\qquad\qquad=\int_{\R^3}\int_{\R^3}\frac{\big(f'(|x|)\omega_x-f'(|y|)\omega_y\big)\cdot(x-y)}{|x-y|^3}|u(t,x)|^2|u(t,y)|^2\,dx\,dy
\end{align*}
where in the last line we have just exchanged the role of $x$ and $y$. Inserting in~\eqref{eq:first-Virial}, we arrive at the following expression
\begin{align}
\frac{d}{dt}\pscal{u(t),A_fu(t)}=&4\int_{\R^3}\frac{f'(|x|)}{|x|}|P^\perp_x\nabla u(t,x)|^2\,dx\nonumber\\
&+4\int_{\R^3}f''(|x|)\left|\omega_x\cdot\nabla u(t,x)+\frac{u(t,x)}{|x|}\right|^2\,dx\nonumber\\
&-\int_{\R^3}f^{(4)}(|x|)|u(t,x)|^2\,dx-2Z\int_{\R^3}\frac{f'(|x|)}{|x|^2}|u(t,x)|^2dx\nonumber\\
&+\int_{\R^3}\int_{\R^3}\frac{\big(f'(|x|)\omega_x-f'(|y|)\omega_y\big)\cdot(x-y)}{|x-y|^3}|u(t,x)|^2|u(t,y)|^2\,dx\,dy.
\label{eq:Virial}
\end{align}

For dimensional reasons, it is natural to take $f(|x|)=|x|^3/3$. The following lemma allows to deal with the last potential term in this special case.

\begin{lemma}[Lower bound on the nonlinear term for $f(r)=r^3/3$]\label{lem:estim_potential_x_3}
We have
\begin{equation}
\frac{\big(|x|^2\omega_x-|y|^2\omega_y\big)\cdot(x-y)}{|x-y|^3}\geq \frac{1}{2}
\label{eq:estim_x_3}
\end{equation}
for all $x\neq y\in\R^3$. In the radial case we have
\begin{equation}
\fint_{S^2}\fint_{S^2}\frac{\big(|x|^2\omega_x-|y|^2\omega_y\big)\cdot(x-y)}{|x-y|^3}d\omega_x\,d\omega_y=1
\label{eq:estim_x_3_radial}
\end{equation}
where $\fint_{S^2}d\omega_x=(4\pi)^{-1}\int_{S^2}d\omega_x$ denotes the (normalized) angular integration.
\end{lemma}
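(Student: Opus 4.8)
The plan is to reduce the pointwise inequality \eqref{eq:estim_x_3} to the triangle inequality $|x|+|y|\ge|x-y|$ by means of an exact algebraic identity, which I find cleaner than the obvious alternative (writing $|x|^3/3$ as a convex function and integrating its Hessian along the segment $[y,x]$, which would force a case split according to whether the origin projects inside or outside that segment). Write $a=|x|$, $b=|y|$ and $D=|x-y|$. Since $|x|^2\omega_x=|x|\,x=ax$ and likewise $|y|^2\omega_y=by$, expanding the scalar product gives
\[
\big(|x|^2\omega_x-|y|^2\omega_y\big)\cdot(x-y)=a\,x\cdot(x-y)-b\,y\cdot(x-y)=a^3+b^3-(a+b)\,x\cdot y .
\]
Substituting $x\cdot y=\tfrac12(a^2+b^2-D^2)$ and using the elementary identity $a^3+b^3-\tfrac12(a+b)(a^2+b^2)=\tfrac12(a+b)(a-b)^2$, I would obtain the formula
\[
\big(|x|^2\omega_x-|y|^2\omega_y\big)\cdot(x-y)=\frac{a+b}{2}\Big[(a-b)^2+D^2\Big].
\]

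From this, \eqref{eq:estim_x_3} is immediate: dividing by $D^3=|x-y|^3$, discarding the nonnegative term $(a-b)^2$, and using $a+b=|x|+|y|\ge|x-y|=D$ yields
\[
\frac{\big(|x|^2\omega_x-|y|^2\omega_y\big)\cdot(x-y)}{|x-y|^3}=\frac{(a+b)\big[(a-b)^2+D^2\big]}{2D^3}\ge\frac{D\cdot D^2}{2D^3}=\frac12 .
\]
Equality forces $a=b$ together with $|x|+|y|=|x-y|$, i.e.\ $y=-x$, which explains why the constant $\tfrac12$ is sharp (and hence cannot yield a strict inequality on its own in the virial argument).

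For the radial identity \eqref{eq:estim_x_3_radial} I would keep $a=|x|$ and $b=|y|$ fixed and average the displayed formula over $\omega_x,\omega_y\in S^2$, which expresses the left-hand side of \eqref{eq:estim_x_3_radial} as
\[
\frac{a+b}{2}\left[(a-b)^2\fint_{S^2}\fint_{S^2}\frac{d\omega_x\,d\omega_y}{|x-y|^{3}}+\fint_{S^2}\fint_{S^2}\frac{d\omega_x\,d\omega_y}{|x-y|}\right].
\]
By rotational invariance each double average collapses to a single average over one sphere with the other point held fixed. Newton's theorem gives $\fint_{S^2}|a\omega-b\omega_0|^{-1}\,d\omega=1/\max(a,b)$, and the substitution $c=\omega\cdot\omega_0$ turns the cubic one into the elementary integral $\tfrac12\int_{-1}^1(a^2+b^2-2abc)^{-3/2}\,dc=\min(a,b)\big/\big(ab\,|a^2-b^2|\big)$, valid for $a\ne b$. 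Inserting these two values and simplifying (say under the assumption $a\ge b$) collapses the bracket to exactly $1$; the degenerate cases $a=b$ and $ab=0$ follow by continuity, respectively by inspection. There is no real obstacle in this lemma: the one point worth getting right is the algebraic decomposition $a^3+b^3-(a+b)\,x\cdot y=\tfrac12(a+b)[(a-b)^2+D^2]$, after which both claims are short, and the only mild care needed in the radial part is the correct evaluation of the two sphere averages and the $a=b$ degeneracy.
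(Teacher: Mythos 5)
Your proposal is correct, and it proceeds by a genuinely different route than the paper. You derive the exact algebraic identity $\big(|x|^2\omega_x-|y|^2\omega_y\big)\cdot(x-y)=\tfrac{1}{2}(|x|+|y|)\big[(|x|-|y|)^2+|x-y|^2\big]$ and then invoke only the triangle inequality, whereas the paper parametrizes the quotient by $u=\min(r,s)/\max(r,s)$ and $\theta=\omega_x\cdot\omega_y$, shows by differentiating in $\theta$ that the minimum sits at $\theta=-1$, and then minimizes the resulting one-variable expression at $u=1$; for the radial identity the paper integrates the quotient in $\theta$ in closed form (a computation it reuses verbatim for general radial $f$ in the localized Lemma on $f_R$), while you split the averaged identity into the Newton average $\fint\fint |x-y|^{-1}$ and the elementary $\fint\fint|x-y|^{-3}$ integral and watch the bracket collapse to $1$. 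Your identity is shorter, makes the sharpness of the constant $\tfrac12$ (attained at $y=-x$) transparent, and avoids any case analysis; the trade-off is that it is tied to the cubic weight, whereas the paper's $\theta$-monotonicity argument and explicit angular integral are exactly the tools it needs again for the regularized virial function $f_R(r)=R^3(r/R-\arctan(r/R))$, so the authors' route buys uniformity across the two lemmas. One small point to tighten: in the radial case your decomposition produces $0\times\infty$ at $|x|=|y|$ (the $|x-y|^{-3}$ average diverges there while $(a-b)^2=0$), and rather than appealing to continuity of the double spherical average --- which is true but not entirely free --- it is cleaner to note that at $a=b$ the original integrand reduces to $\tfrac12\int_{-1}^1 (2-2\theta)^{-1/2}\,d\theta=1$ directly; with that one-line substitution the argument is airtight.
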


\begin{proof}
We compute
\begin{align*}
\frac{(|x|^2\omega_x-|y|^2\omega_y)\cdot(x-y)}{|x-y|^3}=&\frac{r^3+s^3-(r^2s+s^2r)\omega_x\cdot\omega_y}{(r^2+s^2-2rs\omega_x\cdot\omega_y)^{3/2}}\\
=&\frac{1+u^3-(u+u^2)\theta}{(1+u^2-2u\theta)^{3/2}}
\end{align*}
with $x=r\omega_x$, $y=s\omega_y$, $u:=\min(r,s)\max(r,s)^{-1}\in[0,1]$ and $\theta:=\omega_x\cdot\omega_y\in[-1,1]$. Differentiating with respect to $\theta$, we find
$$\frac{\rm d}{{\rm d}\theta}\left(\frac{1+u^3-(u+u^2)\theta}{(1+u^2-2u\theta)^{3/2}}\right)=\frac{u(1+u)(u^2+(1-u)(2-u))-\theta u^2(1+u)}{(1+u^2-2u\theta)^{5/2}}.$$
We have
$u^2+(1-u)(2-u)=u+2(u-1)^2\geq u$
and therefore the numerator is non negative for $u>0$ and $\theta\in[-1,1]$. We conclude that the minimum is attained for $\theta=-1$. The value is
$$\frac{1+u^3+u+u^2}{(1+u)^{3}}=1-\frac{2u}{(1+u)^2}\geq \frac{1}{2}$$
where the minimum is attained for $u=1$. All in all, we find that
$$\frac{(|x|^2\omega_x-|y|^2\omega_y)\cdot(x-y)}{|x-y|^3}\geq \frac12,$$
as was claimed. 
In the radial case we find by explicit integration
\begin{align*}
\fint_{S^2}\fint_{S^2}\frac{\big(|x|^2\omega_x-|y|^2\omega_y\big)\cdot(x-y)}{|x-y|^3} \, d\omega_x\,d\omega_y=&\frac12\int_{-1}^1\frac{1+u^3-(u+u^2)\theta}{(1+u^2-2u\theta)^{3/2}} \, d\theta=1.
\end{align*}
This concludes the proof of Lemma~\ref{lem:estim_potential_x_3}
\end{proof}

\medskip

For $f(r)=r^3/3$, the previous estimates gives 
\begin{multline}
\frac{d}{dt}\pscal{u(t),A_fu(t)}=4\int_{\R^3}|x|\,|P^\perp_x\nabla u(t,x)|^2\,dx+8\int_{\R^3}|x|\left|\omega_x\cdot\nabla u(t,x)+\frac{u(t,x)}{|x|}\right|^2dx\\
+\kappa N^2-2ZN
\label{eq:first-Virial-x3}
\end{multline}
where $\kappa=1$ in the radial case and $\kappa=1/2$ otherwise. If $u$ is a stationary state, then the left side is independent of $t$ and this is a new proof that $N<4Z$ (resp. $2Z$) for bound states. Equation~\eqref{eq:first-Virial-x3} is a new \emph{monotonicity formula} for the Coulombic Hartree equation, when $N\geq 4Z$ (resp. $N\geq2Z$ in the radial case).

\subsection*{Step 2. The Localized Virial Estimate}
We now use a localized virial estimate, which means that we choose a virial function $f_R$ which behaves like $|x|^3/3$ on a ball of radius $R$ and like $|x|$ at infinity. We will take $f_R$ of the form
$$f_R(|x|)=R^{3}f(|x|/R)$$
for 
\begin{equation}
f(r)=r-\arctan r,
\label{eq:choice_f}
\end{equation}
which we have chosen to have
\begin{equation}
f'(r)=\frac{r^2}{1+r^2}=1-\frac{1}{1+r^2}.
\label{eq:choice_f_d}
\end{equation}
Clearly, the first derivative $f'$ is non-decreasing and positive. Hence $x\mapsto f(|x|)$ is a convex function on $\R^3$.
The following lemma gathers some important properties of $f$, which are the `localized' equivalent of Lemma~\ref{lem:estim_potential_x_3} above.

\begin{lemma}[The virial function $f$]\label{lem:f}
Let $f$ be as in~\eqref{eq:choice_f}.
We have
\begin{equation}
\frac{\big(f'(|x|)\omega_x-f'(|y|)\omega_y\big)\cdot(x-y)}{|x-y|^3}\geq \frac{1}{2}\frac{f'(|x|)}{|x|^2}\,\frac{f'(|y|)}{|y|^2}
\label{eq:estim_potential_f}
\end{equation}
for all $x\neq y\in\R^3$. In the radial case, we get
\begin{multline}
\fint_{S^2}\fint_{S^2}\frac{\big(f'(|x|)\omega_x-f'(|y|)\omega_y\big)\cdot(x-y)}{|x-y|^3} \, d\omega_x\,d\omega_y\\
=\frac{f'(\max(|x|,|y|))}{\max(|x|,|y|)^2}=\frac{1}{1+\max(|x|^2,|y|^2)}\geq \frac{f'(|x|)}{|x|^2}\,\frac{f'(|y|)}{|y|^2}.
\label{eq:estim_potential_f_radial}
\end{multline}
\end{lemma}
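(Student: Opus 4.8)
\textbf{Proof proposal for Lemma~\ref{lem:f}.}

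The plan is to reduce both claims to the one-variable computation that appears in the proof of Lemma~\ref{lem:estim_potential_x_3}, now carried out with the specific profile $f'(r)=r^2/(1+r^2)$ instead of $f'(r)=r^2$. First I would set $x=r\omega_x$, $y=s\omega_y$ and $\theta=\omega_x\cdot\omega_y\in[-1,1]$, and rewrite the left-hand side of~\eqref{eq:estim_potential_f} as
\begin{equation*}
\frac{\big(f'(r)\omega_x-f'(s)\omega_y\big)\cdot(x-y)}{|x-y|^3}
=\frac{r\,f'(r)+s\,f'(s)-\big(s\,f'(r)+r\,f'(s)\big)\theta}{(r^2+s^2-2rs\,\theta)^{3/2}}.
\end{equation*}
Just as in the cubic case, the numerator is affine and decreasing in $\theta$ while the denominator is increasing in $\theta$, so the whole quotient is minimized at $\theta=-1$; the monotonicity in $\theta$ is checked by differentiating, exactly as in Lemma~\ref{lem:estim_potential_x_3}, the only new input being that $f'$ is nonnegative and nondecreasing (both clear from~\eqref{eq:choice_f_d}). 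At $\theta=-1$ the denominator becomes $(r+s)^3$ and we are left with showing the pointwise inequality
\begin{equation*}
\frac{r\,f'(r)+s\,f'(s)+\big(s\,f'(r)+r\,f'(s)\big)}{(r+s)^3}
=\frac{f'(r)+f'(s)}{(r+s)^2}\geq \frac12\,\frac{f'(r)}{r^2}\,\frac{f'(s)}{s^2},
\end{equation*}
where I used $r\,f'(r)+s\,f'(r)=(r+s)f'(r)$. Writing $a=f'(r)/r^2=1/(1+r^2)$ and $b=f'(s)/s^2=1/(1+s^2)$, the inequality becomes, after multiplying through by $(r+s)^2$, a statement of the form $(r^2 a)\, (\cdot) + (s^2 b)(\cdot)\geq \tfrac12 ab(r+s)^2$ that should follow from AM--GM together with $a,b\le 1$; I expect this to be the one genuinely computational point, and the step most likely to need care, since one must confirm that the explicit profile $f'(r)=r^2/(1+r^2)$ is "good enough'' and not merely that some profile works.

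For the radial identity~\eqref{eq:estim_potential_f_radial} I would integrate the quotient over $\omega_x,\omega_y\in S^2$; by rotation invariance only the relative angle matters, so
\begin{equation*}
\fint_{S^2}\fint_{S^2}\frac{\big(f'(r)\omega_x-f'(s)\omega_y\big)\cdot(x-y)}{|x-y|^3}\,d\omega_x\,d\omega_y
=\frac12\int_{-1}^1 \frac{r f'(r)+s f'(s)-(s f'(r)+r f'(s))\theta}{(r^2+s^2-2rs\theta)^{3/2}}\,d\theta.
\end{equation*}
This is an elementary integral: the substitution $\rho(\theta)=(r^2+s^2-2rs\theta)^{1/2}$ turns it into a combination of $\int \rho^{-3}\,d\rho$ and $\int \rho^{-1}\,d\rho$ over $\rho\in[|r-s|,r+s]$, and after collecting terms it collapses to $f'(\max(r,s))/\max(r,s)^2=1/(1+\max(r^2,s^2))$. (This matches the cubic computation in Lemma~\ref{lem:estim_potential_x_3}, which is the special case $f'(r)=r^2$, giving value $1$.) Finally, the last inequality in~\eqref{eq:estim_potential_f_radial} is immediate: $1/(1+\max(r^2,s^2))\ge 1/((1+r^2)(1+s^2))=\big(f'(r)/r^2\big)\big(f'(s)/s^2\big)$ since $(1+r^2)(1+s^2)\ge 1+\max(r^2,s^2)$.

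The only real obstacle is the algebraic verification at $\theta=-1$ in the non-radial case, i.e.\ checking that the constant $1/2$ and the specific weights $f'(r)/r^2$ survive with the chosen $f$; everything else is a transcription of the already-completed $|x|^3$ computation, plus a routine primitive. I would organize the write-up so that the $\theta$-monotonicity argument is stated once and reused, and so that the radial integral is computed by the $\rho$-substitution rather than by partial fractions, which keeps the bookkeeping minimal.
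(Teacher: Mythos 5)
Your treatment of the angular variable is where the argument breaks. Writing the quotient as in \eqref{eq:calculation_theta}, you claim it is minimized at $\theta=-1$ because ``the numerator is decreasing and the denominator is increasing in $\theta$''; in fact the denominator $(r^2+s^2-2rs\theta)^{3/2}$ is \emph{decreasing} in $\theta$, so no such monotonicity follows, and for the profile $f'(r)=r^2/(1+r^2)$ it genuinely fails. Differentiating in $\theta$ (as in the paper) only shows that the derivative changes sign at most once, from positive to negative, so the minimum over $[-1,1]$ sits at one of the \emph{two} endpoints $\theta=\pm1$, and for this bounded $f'$ the minimizer can be $\theta=+1$: take $r,s$ large with $r\approx s$; then the value at $\theta=-1$ is $\frac{f'(r)+f'(s)}{(r+s)^2}\approx\frac{1}{2r^2}$, while at $\theta=+1$ it is
\begin{equation*}
\frac{|f'(r)-f'(s)|}{(r-s)^2}=\frac{r+s}{|r-s|}\,\frac{f'(r)}{r^2}\,\frac{f'(s)}{s^2}\approx\frac{2}{|r-s|\,r^3},
\end{equation*}
which is much smaller. (This is unlike the cubic case of Lemma~\ref{lem:estim_potential_x_3}, where the derivative in $\theta$ is nonnegative on all of $[-1,1]$ and the minimum really is at $\theta=-1$; that monotonicity does not transfer to the truncated $f$.) So your proof omits the case $\theta=+1$ entirely, and that case is not vacuous: it must be checked separately, which the paper does via the identity displayed above together with $\frac{r+s}{|r-s|}\geq1$, giving the (even stronger) bound $\frac{f'(r)}{r^2}\frac{f'(s)}{s^2}$ at that endpoint.

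The rest is essentially sound, if sketchy. Your $\theta=-1$ reduction to $\frac{f'(r)+f'(s)}{(r+s)^2}\geq\frac12\frac{f'(r)}{r^2}\frac{f'(s)}{s^2}$ is correct and does follow from the elementary manipulation you indicate (the paper verifies it by writing $f'(r)+f'(s)=\frac{r^2+s^2+2r^2s^2}{(1+r^2)(1+s^2)}$ and noting $r^2+s^2+2r^2s^2\geq\frac12(r+s)^2$), and your radial computation coincides with the paper's: the angular average is an elementary integral equal to $f'(r_>)/r_>^2$ with $r_>=\max(r,s)$, and the final inequality follows from $(1+r^2)(1+s^2)\geq 1+\max(r^2,s^2)$. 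But as written, the pointwise bound \eqref{eq:estim_potential_f} is not proved: you need the two-endpoint statement and the separate $\theta=+1$ estimate.
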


\begin{proof}
Like in Lemma~\ref{lem:estim_potential_x_3}, we write
\begin{equation}
\frac{\big(f'(|x|)\omega_x-f'(|y|)\omega_y\big)\cdot(x-y)}{|x-y|^3}=\frac{rf'(r)+sf'(s)-\theta(sf'(r)+rf'(s))}{(r^2+s^2-2rs\theta)^{3/2}}
\label{eq:calculation_theta}
\end{equation}
with $r=|x|$, $s=|y|$ and $\theta=\omega_x\cdot\omega_y\in[-1,1]$. Differentiating with respect to $\theta$, we find
$$\frac{r(2r^2-s^2)f'(r)+s(2s^2-r^2)f'(s)-\theta rs(sf'(r)+rf'(s))}{(r^2+s^2-2rs\theta)^{5/2}}.$$
Since $f'>0$, the numerator is positive for $\theta\leq\theta_c$ and negative for $\theta\geq\theta_c$. Regardless whether $\theta_c\in[-1,1]$ or not, the minimum of the function in~\eqref{eq:calculation_theta} is attained at $\theta=\pm1$.
For $\theta=-1$, we find
\begin{equation*}
\frac{f'(r)+f'(s)}{(r+s)^{2}}= \frac{r^2+s^2+2r^2s^2}{(1+r^2)(1+s^2)(r+s)^{2}}.
\end{equation*}
Now we remark that
$$\frac{r^2+s^2+2r^2s^2}{(r+s)^{2}}=\frac12+\frac{(r-s)^2+4r^2s^2}{2(r+s)^{2}}\geq \frac12,$$
and therefore
$$\frac{f'(r)+f'(s)}{(r+s)^{2}}\geq \frac1{2(1+r^2)(1+s^2)}.$$
For $\theta=1$, we find
\begin{equation*}
\frac{|f'(r)-f'(s)|}{(r-s)^{2}}= \frac{|r^2(1+s^2)-s^2(1+r^2)|}{(1+r^2)(1+s^2)(r-s)^{2}}=\frac{r+s}{|r-s|}\frac{f'(r)}{r^2}\frac{f'(s)}{s^2}.
\end{equation*}
We have, with $u=\min(r,s)\max(r,s)^{-1}$,
$$\frac{r+s}{|r-s|}=\frac{1+u}{1-u}=1+\frac{2u}{1-u}\geq 1.$$
We conclude that
$$\frac{\big(f'(|x|)\omega_x-f'(|y|)\omega_y\big)\cdot(x-y)}{|x-y|^3}\geq \frac12\frac{f'(|x|)}{|x|^2}\frac{f'(|y|)}{|y|^2},$$
for all $x\neq y\in\R^3$, as was stated.

In the radial case we have to compute the integral over the angle explicitly. We use the notation $r_<:=\min(r,s)$ and $r_>:=\max(r,s)$, and we get
\begin{align*}
&\fint_{S^2}\fint_{S^2}\frac{\big(f'(|x|)\omega_x-f'(|y|)\omega_y\big)\cdot(x-y)}{|x-y|^3}d\omega_x\,d\omega_y\\
&\qquad\qquad=\frac12\int_{-1}^1\frac{rf'(r)+sf'(s)-\theta(sf'(r)+rf'(s))}{(r^2+s^2-2rs\theta)^{3/2}}\,d\theta\\
&\qquad\qquad=\frac{rf'(r)+sf'(s)}{r_>^3}\frac12 \int_{-1}^1d\theta\frac{1}{(1+u^2-2u\theta)^{3/2}}\\
&\qquad\qquad\qquad\qquad -\frac{\big(rf'(s)+sf'(r)\big)}{r_>^3}\frac12 \int_{-1}^1d\theta\frac{\theta}{(1+u^2-2u\theta)^{3/2}}\\
&\qquad\qquad=\frac{rf'(r)+sf'(s)}{r_>^3}\frac{1}{1-u^2}-\frac{\big(rf'(s)+sf'(r)\big)}{r_>^3}\frac{u}{1-u^2}\\
&\qquad\qquad=\frac{1}{r_>(r_>^2-r_<^2)}\left(r_>f'(r_>)+r_<f'(r_<)-\big(r_<f'(r_>)+r_>f'(r_<)\big)r_</r_>\right)\\
&\qquad\qquad=\frac{1}{r_>(r_>^2-r_<^2)}\left(r_>f'(r_>)-r_<^2f'(r_>)/r_>\right)\\
&\qquad\qquad=\frac{f'(r_>)}{r_>^2}.
\end{align*}
Note also that the previous calculation is valid for an arbitrary radial differentiable function $f$, and not just the specific $f$ chosen above. The proof of Lemma \ref{lem:f} is now complete. \end{proof}

\medskip

We apply~\eqref{eq:Virial} for $f_R=R^3f(\cdot/R)$ with $f$ given by~\eqref{eq:choice_f}. We get the expression
\begin{align}
&\frac{d}{dt}\pscal{u(t),A_{f_R}u(t)}\nonumber\\
&\ =4R\int_{\R^3}\frac{Rf'(|x|/R)}{|x|}|P^\perp_x\nabla u(t,x)|^2\,dx+4R\int_{\R^3}f''(|x|/R)\left|\omega_x\cdot\nabla u(t,x)+\frac{u(t,x)}{|x|}\right|^2\!dx\nonumber\\
&\quad\qquad-\frac{1}{R}\int_{\R^3}f^{(4)}(|x|/R)|u(t,x)|^2\,dx-2Z\int_{\R^3}\frac{R^2f'(|x|/R)}{|x|^2}|u(t,x)|^2dx\nonumber\\
&\quad\qquad+R^2\int_{\R^3}\int_{\R^3}\frac{\big(f'(|x|/R)\omega_x-f'(|y|/R)\omega_y\big)\cdot(x-y)}{|x-y|^3}|u(t,x)|^2|u(t,y)|^2\,dx\,dy.
\label{eq:first-localized-Virial}
\end{align}
We now define the localized mass by
\begin{equation}
M_R(t):=\int_{\R^3}\frac{f'_R(|x|)}{|x|^2}|u(t,x)|^2dx=\int_{\R^3}\frac{1}{1+R^{-2}|x|^2}|u(t,x)|^2dx.
\end{equation}
Using~\eqref{eq:estim_potential_f} (resp.~\eqref{eq:estim_potential_f_radial} in the radial case), we get the lower bound
\begin{equation}
\frac{d}{dt}\pscal{u(t),A_{f_R}u(t)}
\geq-\frac{1}{R}\int_{\R^3}f^{(4)}(|x|/R)|u(t,x)|^2\,dx-2ZM_R(t)+\kappa M_R(t)^2
\label{eq:second-localized-Virial}
\end{equation}
with $\kappa=1$ in the radial case and $\kappa=1/2$ otherwise. Finally we remark that
$$-f^{(4)}(r)=24\, r\, \frac{1 - r^2}{(1 + r^2)^4}\geq -24 \frac{r^3}{(1 + r^2)^4}\1(r\geq1)\geq - \frac{3}{1 + r^2}=-3\frac{f'(r)}{r^2}$$
(the best numerical constant is $1.33$ instead of $3$) and we get our final lower bound
\begin{equation}
\frac{d}{dt}\pscal{u(t),A_{f_R}u(t)}
\geq-\left(2Z+\frac{3}{R}\right)\,M_R(t)+\kappa M_R(t)^2.
\label{eq:final_lower_bound}
\end{equation}

To conclude our proof of~\eqref{eq:estimate_average}, we average~\eqref{eq:final_lower_bound} over a time interval $[0,T]$ and use Jensen's inequality 
$$\frac1{T} \int_0^TM_R(t)^2\,dt\geq \left(\frac1{T} \int_0^TM_R(t)\,dt\right)^2,$$
to get
\begin{multline}
\frac{\pscal{u(T),A_{f_R}u(T)}-\pscal{u(0),A_{f_R}u(0)}}{T}\\
\geq \kappa\left(\frac1{T} \int_0^TM_R(t)\,dt\right)^2-(2Z+3/R)\left(\frac1{T} \int_0^TM_R(t)\,dt\right). 
\end{multline}
Note that
\begin{align*}
|\pscal{u(t),A_{f_R}u(t)}|&=\left|\pscal{u(t),\Big(p\cdot\nabla f_R(|x|)+\nabla f_R(|x|)\cdot p\Big)u(t)}\right|\\
&\leq 2\sqrt{K}\sqrt{N}\norm{f'_R}_{L^\ii}=2\,\sqrt{K}\sqrt{N}\,R^2
\end{align*}
since $\sup_{r\geq0}f'(r)=1$, and where we recall that $K=\sup_{t}\norm{\nabla u(t)}_{L^2}$. In summary, we conclude that
$$\kappa\left(\frac1{T} \int_0^TM_R(t)\,dt\right)^2-(2Z+3/R)\left(\frac1{T} \int_0^TM_R(t)\,dt\right)\leq 4\,\sqrt{KN}\,\frac{R^2}T.$$
Using $\sqrt{1+u}\leq 1+u/2$, this implies
\begin{align*}
\frac1{T} \int_0^TM_R(t)\,dt&\leq \frac{2Z+3/R}{2\kappa}+\frac{2Z+3/R}{2\kappa}\sqrt{1+\frac{16\kappa\sqrt{KN}R^2}{(2Z+3/R)^2T}}\\
&\leq \frac{2Z}{\kappa}+\frac{3}R+\frac{4\,\sqrt{KN}R^2}{(2Z+3/R)T}\leq \frac{2Z}{\kappa}+\frac{3}R+\frac{2\,\sqrt{KN}R^2}{ZT},
\end{align*}
which ends the proof of~\eqref{eq:estimate_average}.

\begin{remark}
Our proof works exactly the same for a more general time average based on a positive function $\mu$ such that $\int_0^\ii\mu=1$ and $\mu'$ is a bounded Borel measure. More precisely, we have the estimate
$$\int_0^\ii\frac{\mu(t/T)}T \, dt\;\int_{\R^3}\frac{|u(t,x)|^2}{1+|x|^2/R^2} \, dx\leq \frac{2Z}{\kappa}+\frac{3}R+\frac{\sqrt{KN}R^2}{ZT}\int_0^\ii|\mu'|.$$
For instance, one could take $\mu(t)=e^{-t}.$ 
\end{remark}

\subsection*{Step 3. Estimate on the Local Kinetic Energy}
We show here that the kinetic energy also has a universal upper bound in average, on any ball of radius $R$. This time, we use a localized virial identity based on the function 
$$g_R(|x|)=R^2g(|x|/R)$$
which behaves like $|x|^2$ on $B_R$ and like $|x|$ at infinity. More precisely, we take 
\begin{equation}
g(r)=r-\log(1+r) 
\label{eq:choice_g}
\end{equation}
which is such that
$$g'(r)=\frac{r}{1+r}=1-\frac{1}{1+r}.$$
Clearly $g'$ is positive and non-decreasing, therefore $x\mapsto g(|x|)$ is convex on $\R^3$.

We use the lower bound~\eqref{eq:radial-commutator-estimate} with $\alpha=0$ and we get, by the same calculations as before,
\begin{align}
\frac{d}{dt}\pscal{u(t),A_{g_R}u(t)}=& 4\int_{\R^3}\left(\frac{Rg'(|x|/R)}{|x|}|P_x^\perp\nabla u(t,x)|^2+g''(|x|/R)|\omega_x\cdot\nabla u(t,x)|^2\right)dx\nonumber\\
&-\frac{1}{R^2}\int_{\R^3}\left(g^{(4)}(|x|/R)+4\frac{R\,g^{(3)}(|x|/R)}{|x|}\right)|u(t,x)|^2\,dx\nonumber\\
&+R\int_{\R^3}\int_{\R^3}\frac{\big(\nabla g(x/R)-\nabla g(y/R)\big)\cdot(x-y)}{|x-y|^3}|u(t,x)|^2|u(t,y)|^2\,dx\,dy\nonumber\\
&-2Z\int_{\R^3}\frac{Rg'(|x|/R)}{|x|^2}|u(t,x)|^2\,dx
\label{eq:lower_bound_g}
\end{align}
We denote by
$$K_R(t):=\int_{\R^3}g''(|x|/R)|\nabla u(t,x)|^2\,dx=\int_{\R^3}\frac{|\nabla u(t,x)|^2}{(1+R^{-1}|x|)^2}\,dx$$
the local kinetic energy. Since $x\mapsto g(|x|)$ is convex, then
$(\nabla g(x)-\nabla g(y))\cdot(x-y)\geq0$
for all $x,y\in\R^3$. Also, we notice that
$$g''(r)=\frac{1}{(1+r)^2}\leq \frac{1}{1+r}=\frac{g'(r)}{r}.$$
Finally, we compute
$$g^{(3)}(r)=-\frac{2}{(1+r)^3}\leq0$$
and
$$g^{(4)}(r)=\frac{6}{(1+r)^4}\leq \frac{6}{1+r^2}=6\,\frac{f'(r)}{r^2}.$$
So we arrive at the estimate
\begin{equation}
\frac{d}{dt}\pscal{u(t),A_{g_R}u(t)}\geq 4K_R(t)-\frac{6}{R^2}M_R(t)-2Z\int_{\R^3}\frac{Rg'(|x|/R)}{|x|^2}|u(t,x)|^2\,dx.
\label{eq:almost_final_g}
\end{equation}
In order to control the negative term, we use again the trick of Hardy:
\begin{align*}
0\leq& \int_{\R^3}g''(|x|/R)\left|\nabla u(t,x)+\alpha\,\omega_x\,u(t,x)\right|^2\,dx\\
=&\int_{\R^3}g''(|x|/R)|\nabla u(t,x)|^2\,dx-\alpha\int_{\R^3}{\rm div}\big(\omega_xg''(|x|/R)\big)|u(t,x)|^2\,dx\\
&+\alpha^2\int_{\R^3}g''(|x|/R)|u(t,x)|^2\,dx\\
=&\int_{\R^3}g''(|x|/R)|\nabla u(t,x)|^2\,dx+\alpha^2\int_{\R^3}g''(|x|/R)|u(t,x)|^2\,dx\\
&-2\alpha\int_{\R^3}\frac{g''(|x|/R)}{|x|}|u(t,x)|^2\,dx-\frac{\alpha}R \int_{\R^3}g^{(3)}(|x|/R)|u(t,x)|^2\,dx.
\end{align*}
Therefore, using that $-g^{(3)}(r)=2(1+r)^{-3}\leq2(1+r^2)^{-1}=2f'(r)r^{-2}$ and that $g''(r)=(1+r)^{-2}\leq (1+r^2)^{-1}=f'(r)r^{-2}$, we find
\begin{equation*}
\int_{\R^3}\frac{g''(|x|/R)}{|x|}|u(t,x)|^2\,dx\leq \frac{1}{2\alpha}K_R(t)+\left(\frac{\alpha}2+\frac1R\right)M_R(t).
\end{equation*}
Coming back to the negative term in~\eqref{eq:lower_bound_g}, we write
\begin{align*}
&\int_{\R^3}\frac{Rg'(|x|/R)}{|x|^2}|u(t,x)|^2\,dx\\
&\qquad = \int_{\R^3}\frac{1}{|x|(1+|x|/R)}|u(t,x)|^2\,dx\\
&\qquad= \int_{\R^3}\frac{1}{|x|(1+|x|/R)^2}|u(t,x)|^2\,dx+\frac1R\int_{\R^3}\frac{1}{(1+|x|/R)^2}|u(t,x)|^2\,dx\\
&\qquad \leq \frac{1}{2\alpha}K_R(t)+\left(\frac{\alpha}2+\frac2R\right)M_R(t).
\end{align*}
Inserting in~\eqref{eq:almost_final_g} gives
\begin{equation}
\frac{d}{dt}\pscal{u(t),A_{g_R}u(t)}\geq \left(4-\frac{Z}{\alpha}\right)K_R(t)-Z\left(\alpha+\frac4R+\frac{6}{R^2}\right)M_R(t).
\label{eq:almost_final_g2}
\end{equation}
Taking $\alpha=Z/2$ leads to 
\begin{equation}
\frac{d}{dt}\pscal{u(t),A_{g_R}u(t)}\geq 2K_R(t)-Z\left(\frac{Z}{2}+\frac4R+\frac{6}{R^2}\right)M_R(t).
\label{eq:almost_final_g3}
\end{equation}
To conclude our proof, we average over $t$ in an interval $[0,T]$ using that
$$|\pscal{u(t),A_{g_R}u(t)}|\leq2\sqrt{K}\sqrt{N}\norm{g'_R}_{L^\ii}=2R\sqrt{K}\sqrt{N}$$
and we get
\begin{equation}
\frac1T\int_0^TK_R(t)\,dt\leq Z\left(\frac{Z}{4}+\frac2R+\frac{3}{R^2}\right)\frac1T\int_0^TM_R(t)\,dt+\frac{2R\sqrt{K}\sqrt{N}}{T},
\end{equation}
which concludes the proof of~\eqref{eq:estimate_average_kinetic}.

\subsection*{Step 4. Estimate on $K$}

We end the proof of Theorem~\ref{thm:Hartree} by estimating the maximal value $K$ of the kinetic energy of $u(t)$, in terms of $\norm{u_0}_{H^1}$, using the conservation of energy.

\begin{lemma}[Kinetic energy estimate]\label{lem:estim_H_1}
We have 
\begin{equation}
\norm{\nabla u(t)}_{L^2(\R^3)}^2\leq {Z^2\norm{u_0}^2_{L^2(\R^3)}+ \norm{\nabla u_0}^2_{L^2(\R^3)}+\frac12\norm{u_0}^3_{L^2(\R^3)}\norm{\nabla u_0}_{L^2(\R^3)}}
\label{eq:estim_H_1}
\end{equation}
for all $t\in\R$.
\end{lemma}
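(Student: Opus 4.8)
The plan is to use only conservation of the Hartree energy together with the two elementary facts recalled in the introduction: the positivity of the Coulomb repulsion term in \eqref{eq:Hartree-energy}, and the hydrogen-type bound $\int_{\R^3}|v|^2/|x|\,dx\le\norm{v}_{L^2(\R^3)}\norm{\nabla v}_{L^2(\R^3)}$ (which follows from $\inf\mathrm{Spec}(-\Delta/2-|x|^{-1})=-1/2$). Write $T(t):=\norm{\nabla u(t)}_{L^2(\R^3)}^2$ and recall that $\norm{u(t)}_{L^2}^2=N:=\norm{u_0}_{L^2}^2$ is conserved. Since $\cE_Z(u(t))=\cE_Z(u_0)$ and the double integral in \eqref{eq:Hartree-energy} is non-negative, we get
$$T(t)\;\le\;\cE_Z(u_0)+Z\int_{\R^3}\frac{|u(t,x)|^2}{|x|}\,dx\;\le\;\cE_Z(u_0)+Z\sqrt{N}\,\sqrt{T(t)}\,,$$
so that $\xi=\sqrt{T(t)}\ge0$ satisfies the quadratic inequality $\xi^2-Z\sqrt{N}\,\xi-\cE_Z(u_0)\le0$.

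Next I would solve this inequality. It forces $Z^2N+4\cE_Z(u_0)\ge0$, and in every case yields $\sqrt{T(t)}\le\tfrac12\bigl(Z\sqrt{N}+\sqrt{Z^2N+4\,[\cE_Z(u_0)]_+}\bigr)$, where $[\,\cdot\,]_+$ denotes the positive part; squaring and using $(a+b)^2\le2a^2+2b^2$ gives $T(t)\le Z^2N+2\,[\cE_Z(u_0)]_+$. It then remains to bound $\cE_Z(u_0)$ from above in terms of $\norm{u_0}_{H^1}$ only. Discarding the attractive term, $\cE_Z(u_0)\le\norm{\nabla u_0}_{L^2}^2+\tfrac12 D_0$ with $D_0:=\iint_{\R^3\times\R^3}|u_0(x)|^2|u_0(y)|^2|x-y|^{-1}\,dx\,dy$. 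For $D_0$ I would write $D_0=\int_{\R^3}|u_0(x)|^2\bigl(|u_0|^2\ast|x|^{-1}\bigr)(x)\,dx\le\norm{u_0}_{L^2}^2\,\bigl\|\,|u_0|^2\ast|x|^{-1}\bigr\|_{L^\ii}$ and observe that $\bigl(|u_0|^2\ast|x|^{-1}\bigr)(x)=\int_{\R^3}|u_0(x+z)|^2|z|^{-1}\,dz\le\norm{u_0}_{L^2}\norm{\nabla u_0}_{L^2}$ by the same hydrogen bound applied to the translate $z\mapsto u_0(x+z)$; hence $D_0\le\norm{u_0}_{L^2}^3\norm{\nabla u_0}_{L^2}$. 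Inserting this (the resulting upper bound for $\cE_Z(u_0)$ being non-negative) yields an estimate of the stated form, up to the displayed numerical constants.

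The argument has no genuine obstacle: it is a short consequence of energy conservation. The only points requiring a little care are keeping track of the sign of $\cE_Z(u_0)$ when extracting $\sqrt{T(t)}$ from the quadratic inequality, and controlling the Coulomb self-energy $D_0$ purely in terms of $\norm{u_0}_{L^2}$ and $\norm{\nabla u_0}_{L^2}$ — here done via the translation trick above, though one could equally invoke the Hardy--Littlewood--Sobolev inequality together with the Sobolev embedding $\dot H^1(\R^3)\hookrightarrow L^6(\R^3)$ and interpolation.
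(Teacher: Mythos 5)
Your proof is correct and essentially the same as the paper's: both rest on conservation of mass and energy, positivity of the repulsion, and the hydrogen-type bound $\int|v|^2|x|^{-1}\leq\norm{v}_{L^2}\norm{\nabla v}_{L^2}$, used once to control the attraction at time $t$ and once (via your translation observation, which is exactly the paper's uniform-in-$x$ bound on $|u_0|^2\ast|x|^{-1}$) to control the Coulomb self-energy in $\cE_Z(u_0)$. The only cosmetic difference is bookkeeping — you solve a quadratic inequality in $\sqrt{T(t)}$, whereas the paper splits off half the kinetic energy and invokes the operator bound $-\Delta/2-Z|x|^{-1}\geq -Z^2/2$ — and, like the paper's own proof, you land on the bound $Z^2\norm{u_0}_{L^2}^2+2\norm{\nabla u_0}_{L^2}^2+\norm{u_0}_{L^2}^3\norm{\nabla u_0}_{L^2}$ (the form used in \eqref{eq:unif_K}) rather than the slightly smaller constants displayed in \eqref{eq:estim_H_1}.
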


\begin{proof}[Proof of Lemma~\ref{lem:estim_H_1}]
By conservation of energy and mass, we find
\begin{align*}
\cE_Z(u_0)=\cE_Z(u)&\geq \pscal{u,\left(-\frac{\Delta}{2}-\frac{Z}{|x|}\right)u}+\frac12{\norm{\nabla u}^2_{L^2(\R^3)}}\\
&\geq -\frac{Z^2}{2}\int_{\R^3}|u_0|^2+\frac12{\norm{\nabla u}^2_{L^2(\R^3)}},
\end{align*}
since $-{\Delta}/{2}-Z\,|x|^{-1}\geq -Z^2/2$ (hydrogen atom). Next, for $x \in \R^3$ and $u\in H^1(\R^3)$, we note the bound
\begin{equation}
\int_{\R^3}\frac{|u(y)|^2}{|x-y|}\,dy\leq \min_{z\geq0}\left(\frac{z}{2}\int_{\R^3}|u|^2+\frac{1}{2z}\int_{\R^3}|\nabla u|^2\right) =\norm{u}_{L^2(\R^3)}\norm{\nabla u}_{L^2(\R^3)} ,
\end{equation}
which gives us
$$\cE_Z(u_0)\leq \norm{\nabla u_0}_{L^2(\R^3)}^2+\frac12 \norm{u_0}_{L^2(\R^3)}^3\norm{\nabla u_0}_{L^2(\R^3)} .$$
Hence,
$$\norm{\nabla u}^2_{L^2(\R^3)}\leq Z^2\norm{u_0}^2_{L^2(\R^3)}+ 2\norm{\nabla u_0}^2_{L^2(\R^3)}+\norm{u_0}^3_{L^2(\R^3)}\norm{\nabla u_0}_{L^2(\R^3)}.$$
\end{proof}

\bigskip

This concludes the proof of Theorem~\ref{thm:Hartree}. \qed

\section{Extensions: Hartree-Fock and Many-Body Schr\"odinger Theories}\label{sec:extensions}

\subsection{Hartree-Fock Theory}\label{sec:Hartree-Fock}

The Hartree-Fock equations describe the nonlinear evolution of a wave function taking the form of a Slater determinant, i.\,e., 
$$\Psi(t)=\frac{1}{\sqrt{N!}}\sum_{\sigma\in\mathfrak{S}_N}{\rm sgn}(\sigma)\;u_{1}(t,x_{\sigma(1)})\cdots u_N(t,x_{\sigma(N)})$$
where the functions $u_1,...,u_N$ model the states of the $N$ electrons. The physical fact that electrons are fermions is expressed in the Pauli principle given by the orthonormality condition
$$\pscal{u_j,u_k}_{L^2}=\delta_{jk}.$$
The Hartree-Fock equations~\cite{LieSim-77,Chadam-76,BovPraFan-76} form a system of $N$ coupled nonlinear equations similar to~\eqref{eq:Hartree-equation}
\begin{equation}
\begin{cases}
\displaystyle i\frac{\partial}{\partial t}u_j=H_u\,u_j,\\[0.2cm]
H_uv=\displaystyle-\Delta v -Z|x|^{-1}v+\sum_{k=1}^N|u_k|^2\ast|x|^{-1}v-\sum_{k=1}^N\big(\overline{u_j}v\big)\ast|x|^{-1}u_k.
\end{cases}
\label{eq:HF-orbitals} 
\end{equation}
One simple way to write the same equation is to introduce the one-body density matrix
$$\gamma(t):=\sum_{k=1}^N|u_k\rangle\langle u_k|$$
which is the orthogonal projection onto the space spanned by the functions $u_1,...,u_N$. Then~\eqref{eq:HF-orbitals} is equivalent to the so-called von Neumann equation
\begin{equation}
\begin{cases}
\displaystyle i\frac{\partial}{\partial t}\gamma=[H_\gamma,\gamma],\\[0.2cm]
H_\gamma v=\displaystyle-\Delta v-Z|x|^{-1}v+\rho_{\gamma(t)}\ast|x|^{-1}v-\int_{\R^3}\gamma(t,x,y)v(y)\,dy.
\end{cases}
\label{eq:HF} 
\end{equation}
Here $\rho_\gamma(x):=\gamma(x,x)$ is the density associated with the matrix $\gamma$. The time-dependent equation ~\eqref{eq:HF} makes in fact sense for any trace-class operator $\gamma$ such that 
$$0\leq\gamma\leq 1\qquad\text{and}\qquad \tr(\gamma)=N,$$
which corresponds to \emph{generalized Hartree-Fock states}~\cite{BacLieSol-94}. Note that the infinite rank case $\mathrm{rank} \, \gamma = +\infty$ is also allowed here. We refer to \cite{Chadam-76,BovPraFan-76} for the proof of global well-posedness for \eqref{eq:HF} with initial data such that $\tr (1-\Delta) \gamma_0 < +\infty$.  

The following result is the equivalent of Theorem~\ref{thm:Hartree} in the Hartree-Fock case.

\begin{theorem}[Long-time behavior of atoms in Hartree-Fock theory]\label{thm:Hartree-Fock}
Suppose $Z>0$ and let $\gamma_0$ be an arbitrary initial datum such that
$$\tr(1-\Delta)\gamma_0<\ii.$$
Denote by $\gamma(t)$ the unique solution of~\eqref{eq:HF}.  Then we have the following estimate 
\begin{equation}
\frac1{T} \int_0^Tdt\;\int_{\R^3}dx\,\frac{\rho_{\gamma(t)}(x)}{1+|x|^2/R^2}\leq 4Z+1+\frac{3}R+\frac{2\,\sqrt{KN}R^2}{ZT}
\label{eq:estimate_average_HF}
\end{equation}
with
$$N:=\tr(\gamma_0)$$
and
\begin{equation}
K:=\sup_{t\geq0}\tr(-\Delta)\gamma(t)\leq {Z^2N+ 2\tr(-\Delta)\gamma_0+N^3\sqrt{\tr(-\Delta)\gamma_0}}.
\label{eq:unif_K_HF} 
\end{equation}
In particular, we have 
\begin{equation}
{\limsup_{T\to\infty}\frac1{T} \int_0^Tdt\int_{|x|\leq r}dx\,\rho_{\gamma(t)}(x)\leq 4Z+1}
\label{eq:limsup_HF}
\end{equation}
for every $r>0$. Similarly, we have the following estimate on the local kinetic energy
\begin{multline}
\frac1{T} \int_0^T\!dt\int_{\R^3}dx\,\frac{\tau_{\gamma(t)}(x)}{(1+|x|/R)^2}\\
\leq \left(\frac{Z^2}{4}+\frac{2Z}R+\frac{3Z}{R^2}\right)\frac1{T} \int_0^Tdt\;\int_{\R^3}\frac{\rho_{\gamma(t)}(x)}{1+|x|^2/R^2}\,dx+\frac{2R\sqrt{K}\sqrt{N}}{T}
\label{eq:estimate_average_kinetic_HF}
\end{multline}
where $\tau_\gamma(x)=-\sum_{k=1}^3(\partial_k\gamma\partial_k)(x,x)$ is the density of kinetic energy, and therefore
\begin{equation}
{\limsup_{T\to\infty}\frac1{T} \int_0^Tdt\int_{|x|\leq r}dx\,\tau_{\gamma(t)}(x)\leq \frac{Z^2}4(4Z+1)}
\label{eq:limsup_K_HF}
\end{equation}
for every $r>0$.

If the initial datum $\gamma_0$ is radial in the sense that 
$$\gamma_0(\mathscr{R} x,\mathscr{R} y)=\gamma_0(x,y),\qquad\forall x,y\in\R^3,\quad\forall \mathscr{R}\in SO(3),$$
then $\gamma(t)$ is radial for all times and the same estimates~\eqref{eq:estimate_average_HF} and~\eqref{eq:limsup_K_HF} hold true with $4Z+1$ replaced by $2Z+1$.  
\end{theorem}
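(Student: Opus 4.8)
The plan is to reduce Theorem~\ref{thm:Hartree-Fock} to essentially the same virial computations carried out in Section~\ref{sec:proof} for the Hartree equation, using the one-body density matrix formulation~\eqref{eq:HF}. First I would set up the virial quantity $\pscal{A_{f_R}}_{\gamma(t)} := \tr(A_{f_R}\gamma(t))$ with $A_{f_R}$ as in~\eqref{eq:def_A_f}, and differentiate in time using the von Neumann equation $i\partial_t\gamma = [H_\gamma,\gamma]$, which gives
\begin{equation*}
\frac{d}{dt}\tr(A_{f_R}\gamma(t)) = \tr\big(i[H_\gamma,A_{f_R}]\gamma(t)\big) = \tr\big(i[-\Delta,A_{f_R}]\gamma(t)\big) + \tr\big(i[V_\gamma,A_{f_R}]\gamma(t)\big),
\end{equation*}
where $V_\gamma = -Z|x|^{-1} + \rho_{\gamma}\ast|x|^{-1}$ is the (multiplication) part of $H_\gamma$; crucially, the exchange term $-\int\gamma(x,y)v(y)\,dy$ commutes trivially in the relevant sense or contributes with a favorable sign, a point I will come back to. The kinetic double-commutator term is handled exactly as before: by Lemma~1 (the $d=3$ lower bound), $i[-\Delta,A_{f_R}] = -[p^2,[p^2,f_R]]/?$ bounded below by a positive operator plus the $-f_R^{(4)}$ multiplication term, and tracing against $\gamma(t)\geq0$ preserves the inequality. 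The direct (Hartree) part of the potential commutator yields, after symmetrizing in $x\leftrightarrow y$ and using Lemma~\ref{lem:f} (inequality~\eqref{eq:estim_potential_f}), exactly the term $\kappa M_R(t)^2$ with $M_R(t) = \int (1+|x|^2/R^2)^{-1}\rho_{\gamma(t)}\,dx$, just as in~\eqref{eq:second-localized-Virial}.

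The genuinely new ingredient is the exchange term. Writing it as $-\frac12\iint \frac{|\gamma(t,x,y)|^2}{|x-y|}$-type contributions in the energy and, for the virial, as a commutator $i[K_\gamma, A_{f_R}]$ with $K_\gamma$ the exchange operator, I would show that its contribution to $\frac{d}{dt}\tr(A_{f_R}\gamma)$ is bounded below by something of the order $-C M_R(t)$ or is in fact non-negative. The natural computation is to express $\tr(i[K_\gamma,A_{f_R}]\gamma)$ as a double integral
\begin{equation*}
-\iint \frac{\big(f_R'(|x|)\omega_x - f_R'(|y|)\omega_y\big)\cdot(x-y)}{|x-y|^3}\,|\gamma(t,x,y)|^2\,dx\,dy,
\end{equation*}
which, by Lemma~\ref{lem:f}, is bounded above by $-\frac{\kappa}{2}\iint \frac{f_R'(|x|)}{|x|^2}\frac{f_R'(|y|)}{|y|^2}|\gamma(t,x,y)|^2$; since this appears with a minus sign it helps rather than hurts — the exchange term in Hartree--Fock is attractive and reduces the electron--electron repulsion, which is why one expects the same $4Z$ (resp.\ $2Z$) constant, with only the additive $+1$ coming in from a different source. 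That additive $+1$ is, I believe, an artifact of bounding $\tr\gamma^2 \leq \tr\gamma$ (from $0\leq\gamma\leq1$) somewhere — most plausibly in controlling a self-interaction/diagonal term where the Hartree computation has $|u(x)|^2|u(x)|^2$ but the Hartree--Fock one needs $\rho_\gamma(x)^2$ versus $\int|\gamma(x,y)|^2dy \leq \rho_\gamma(x)$; tracking this carefully is where the $+1$ enters~\eqref{eq:estimate_average_HF}.

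From there the argument is parallel to Steps~2--4: I obtain $\frac{d}{dt}\tr(A_{f_R}\gamma(t)) \geq -(2Z + 3/R)M_R(t) + \kappa M_R(t)^2 - M_R(t)$ (the extra $-M_R(t)$ being the source of the $+1$), bound $|\tr(A_{f_R}\gamma(t))| \leq 2\sqrt{K}\sqrt{N}\,R^2$ using $\|f_R'\|_{L^\infty} = R^2$ and Cauchy--Schwarz with $K = \sup_t \tr(-\Delta)\gamma(t)$, $N = \tr\gamma$, then time-average and apply Jensen's inequality exactly as in Step~2 to get~\eqref{eq:estimate_average_HF} and its $\limsup$ consequence~\eqref{eq:limsup_HF}. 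The local kinetic energy bound~\eqref{eq:estimate_average_kinetic_HF} follows by repeating Step~3 with the virial function $g_R$ and $A_{g_R}$, noting the exchange term again has a favorable sign. The uniform bound~\eqref{eq:unif_K_HF} on $K$ comes from conservation of the Hartree--Fock energy: one checks $\cE^{\rm HF}(\gamma) \geq \tr(-\Delta/2 - Z|x|^{-1})\gamma + \frac12\tr(-\Delta)\gamma - (\text{direct} - \text{exchange})$, uses the hydrogenic bound $-\Delta/2 - Z|x|^{-1}\geq -Z^2/2$ operator-wise (so that $\tr$ of it against $\gamma$ is $\geq -Z^2 N/2$), discards the non-negative direct-minus-exchange term (positivity of the two-body interaction restricted to fermions), and bounds the direct term in the initial datum via Lemma~\ref{lem:estim_H_1}'s inequality applied to $\rho_{\gamma_0}$. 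The radial case is identical with $\kappa = 1$, the symmetry being propagated by the flow because the equation~\eqref{eq:HF} is $SO(3)$-covariant. The main obstacle I anticipate is getting the sign and the precise constant of the exchange-term contribution to the virial right — in particular confirming that it never degrades the leading constant and isolating cleanly where the harmless $+1$ is generated; everything else is a faithful transcription of Section~\ref{sec:proof} with $|u(t,x)|^2$ replaced by $\rho_{\gamma(t)}(x)$ and an extra pair of $|\gamma(t,x,y)|^2$ integrals to keep track of.
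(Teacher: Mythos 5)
Your overall skeleton (differentiating $\tr(A_{f_R}\gamma(t))$, the localized virial function $f_R$, Jensen plus time averaging, the $g_R$ argument for the local kinetic energy, and energy conservation for $K$) is exactly the paper's. The genuine gap is in the one new ingredient, the exchange term, where your sign analysis is wrong. With $H_\gamma=p^2+V_\gamma-X_\gamma$, the correct computation (the paper's ``exchange term'' lemma) gives
\begin{equation*}
i\tr\big([X_\gamma,A_f]\gamma\big)=\iint\frac{\big(\nabla f(x)-\nabla f(y)\big)\cdot(x-y)}{|x-y|^3}\,|\gamma(x,y)|^2\,dx\,dy\;\geq\;0
\end{equation*}
for convex $f$, and this enters $\frac{d}{dt}\tr(A_f\gamma)=i\tr([H_\gamma,A_f]\gamma)$ with a \emph{minus} sign. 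Hence the exchange contribution to the virial derivative is negative: exchange reduces the electron--electron repulsion and therefore weakens the outward push that the whole argument relies on. Your displayed formula for $\tr(i[K_\gamma,A_{f_R}]\gamma)$ has the opposite sign of the correct one, and your conclusion that the exchange term ``helps rather than hurts'' (or is non-negative) cannot be right: if it were favorable, no $+1$ would appear in $4Z+1$ at all. As written, your argument does not control the exchange term; it simply asserts it away and then reintroduces an ad hoc $-M_R(t)$ loss by guesswork.

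The correct treatment combines direct and exchange into the single non-negative weight $\rho_\gamma(x)\rho_\gamma(y)-|\gamma(x,y)|^2\geq0$ (Cauchy--Schwarz on the kernel of $\gamma\geq0$), applies the kernel lower bound~\eqref{eq:estim_potential_f} to that weight to obtain
\begin{equation*}
\iint\frac{\big(\nabla f_R(x)-\nabla f_R(y)\big)\cdot(x-y)}{|x-y|^3}\big(\rho_\gamma(x)\rho_\gamma(y)-|\gamma(x,y)|^2\big)\,dx\,dy\;\geq\;\tfrac12\Big(M_R(t)^2-\tr(h_R\gamma h_R\gamma)\Big),
\end{equation*}
with $h_R=R^2f'(|x|/R)|x|^{-2}$, and then uses $0\leq\gamma\leq1$, $0\leq h_R\leq1$ to get $\tr(h_R\gamma h_R\gamma)\leq\tr(h_R\gamma)=M_R(t)$. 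This operator inequality (your guess $\gamma^2\leq\gamma$ points at the right fact) is the precise source of the $+1$. Note also the quantitative mismatch in your final differential inequality: you wrote the loss as $-M_R(t)$ alongside $\kappa M_R(t)^2$, which after dividing by $\kappa=1/2$ would give $4Z+2$; the correct loss in the general case is $-\tfrac12 M_R(t)$, while in the radial case one uses~\eqref{eq:estim_potential_f_radial} with $\kappa=1$ and loss $-M_R(t)$, yielding $+1$ in both cases. For the kinetic-energy step and the energy bound~\eqref{eq:unif_K_HF} your sketch is fine, since there only non-negativity of the combined direct-minus-exchange quantity is needed, which again follows from $\rho_\gamma(x)\rho_\gamma(y)\geq|\gamma(x,y)|^2$ and convexity of $g_R$, not from any favorable sign of the exchange term alone.
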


The proof of Theorem~\ref{thm:Hartree-Fock} is very similar to that of Theorem~\ref{thm:Hartree}, the main new difficulty is the control of the exchange term. Thus we only explain how to deal with it.

\begin{proof}[Sketch of the Proof of Theorem~\ref{thm:Hartree-Fock}]
First, we consider a sufficiently smooth radial function $f=f(|x|)$. (Below we will take $f=f_R$, the same as in the proof of Theorem~\ref{thm:Hartree}.) Differentiating with respect to $t$, we find 
\begin{align}
\frac{d}{dt}\,\tr (A_f\gamma)& =i\tr\big([H_\gamma,A_f]\gamma\big)\nonumber\\
&=-\tr\big([p^2,[p^2,f]]\gamma\big)+i\tr\big([V_\gamma,A_f]\gamma\big)-i\tr\big([X_\gamma,A_f]\gamma\big)\label{eq:commutator_HF}
\end{align}
where $V_\gamma=-Z|x|^{-1}+|x|^{-1}\ast\rho_\gamma$ and $X_\gamma$ is the \emph{exchange term} defined by
$$(X_\gamma u)(x)=\int_{\R^3}\frac{\gamma(x,y)}{|x-y|}u(y)\,dy.$$
Note that 
$$i[V_\gamma,A_f]=-2\nabla f\cdot \nabla V_\gamma$$
is a function (that is, a multiplication operator). Analogous to the Hartree case, we thus obtain
\begin{align*}
i\tr\big([V_\gamma,A_f]\gamma\big)=&-2\int_{\R^3}\rho_\gamma(x)\nabla f(x)\cdot \nabla V_\gamma(x)\,dx\\
=& -2Z\int_{\R^3}\frac{f'(|x|)}{|x|^2}\rho_\gamma(t,x)\,dx\\
&\qquad\qquad+\int_{\R^3}\int_{\R^3}\frac{\big(\nabla f(x)-\nabla f(y)\big)\cdot (x-y)}{|x-y|^3}\rho_\gamma(x)\,\rho_\gamma(y)\,dx\,dy.
\end{align*}
The exchange term is controlled using the following fact.

\begin{lemma}[Exchange term]
Let $\tr (1-\Delta) \gamma < +\infty$ and suppose $f : \R^d \to \R$ satisfies $\nabla f \in L^\infty(\R^d)$. Then we have
\begin{equation}
i\tr\big([X_\gamma,A_f]\gamma\big) = \int_{\R^3}\int_{\R^3}\frac{\big(\nabla f(x)-\nabla f(y)\big)\cdot (x-y)}{|x-y|^3}|\gamma(x,y)|^2\,dx\,dy.
\end{equation}
\end{lemma}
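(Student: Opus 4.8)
The plan is to turn the trace of the commutator into a bare kernel computation by peeling off the unbounded operator $A_f$, moving it to the right of a known Hilbert--Schmidt kernel, and then exhibiting a short cancellation. First I would record that $A_f=p\cdot\nabla f+\nabla f\cdot p=-2i\,\nabla f\cdot\nabla-i\,\Delta f$ is symmetric, and that an integration by parts in the second variable gives, for an operator $K$ with a smooth enough kernel $K(x,y)$,
$$(KA_f)(x,y)=2i\,\nabla_y K(x,y)\cdot\nabla f(y)+i\,K(x,y)\,\Delta f(y),\qquad
\tr(KA_f)=\int_{\R^3}\!\Big(2i\,\big[\nabla_y K(x,y)\big]_{y=x}\cdot\nabla f(x)+i\,K(x,x)\,\Delta f(x)\Big)dx.$$
Since $\tr\big([X_\gamma,A_f]\gamma\big)=\tr(X_\gamma A_f\gamma)-\tr(A_f X_\gamma\gamma)$, cyclicity of the trace rewrites the two terms as $\tr\big((\gamma X_\gamma)A_f\big)$ and $\tr\big((X_\gamma\gamma)A_f\big)$ respectively; so it only remains to apply the formula above to the two Hilbert--Schmidt operators $\gamma X_\gamma$ and $X_\gamma\gamma$, whose kernels are $\int_{\R^3}\gamma(x,z)\,\gamma(z,y)\,|z-y|^{-1}\,dz$ and $\int_{\R^3}\gamma(x,z)\,|x-z|^{-1}\,\gamma(z,y)\,dz$.

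Next comes the cancellation. On the diagonal both kernels equal $\int_{\R^3}\gamma(x,z)\,\gamma(z,x)\,|x-z|^{-1}\,dz$, so the two $\Delta f$ contributions are identical and drop out of the difference. In the $\nabla f$ contributions, the piece in which $\partial_y$ hits $\gamma(z,y)$ is again literally the same expression for the two operators and cancels; the only surviving term is the one in which $\partial_y$ falls on the Coulomb factor $|z-y|^{-1}$ inside $\gamma X_\gamma$ — there is no such $y$-dependent factor in $X_\gamma\gamma$. Using $\big[\nabla_y|z-y|^{-1}\big]_{y=x}=(z-x)|z-x|^{-3}$ and $\gamma(x,z)\gamma(z,x)=|\gamma(x,z)|^2$ (self-adjointness of $\gamma$), this leaves
$$\tr\big([X_\gamma,A_f]\gamma\big)=2i\int_{\R^3}\!\int_{\R^3}|\gamma(x,y)|^2\,\nabla f(x)\cdot\frac{y-x}{|x-y|^3}\,dx\,dy.$$
Multiplying by $i$ and symmetrizing in $x\leftrightarrow y$ (legitimate since the weight $|\gamma(x,y)|^2/|x-y|^3$ is symmetric), so that $2\,\nabla f(x)\cdot(x-y)$ is replaced by $(\nabla f(x)-\nabla f(y))\cdot(x-y)$, yields exactly the claimed identity.

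The one genuine issue — and the main obstacle — is to make all of the above rigorous: that $X_\gamma$ is bounded and that $\gamma X_\gamma$, $X_\gamma\gamma$ are trace class with the asserted kernels; that the integrations by parts carry no boundary terms; that one may differentiate under the $z$-integral and restrict kernels to the diagonal; and that cyclicity is valid with the unbounded $A_f$ present. This is precisely where the hypotheses $\tr(1-\Delta)\gamma<\infty$ and $\nabla f\in L^\infty(\R^3)$ enter. I would first prove the identity for $\gamma$ of finite rank with Schwartz eigenfunctions — where $X_\gamma=\sum_k\lambda_k\,u_k\,\big(|x|^{-1}\ast(\overline{u_k}\,\cdot\,)\big)$, $A_f$ is essentially self-adjoint on $C_c^\infty(\R^3)$ because $\nabla f$ is bounded, and every step is elementary — and then pass to a general $\gamma$ by approximating $\gamma_0$ in the norm $\gamma\mapsto\tr(1-\Delta)\gamma$, using the uniform-in-$t$ bound on $\tr(-\Delta)\gamma(t)$ together with the Hardy and Hardy--Littlewood--Sobolev inequalities to control $X_\gamma$, $[X_\gamma,A_f]$ and the bilinear right-hand side continuously in terms of $\tr(1-\Delta)\gamma$ and $\|\nabla f\|_{L^\infty}$. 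Given the machinery already in place, this last step is routine, so I would present only a sketch of it.
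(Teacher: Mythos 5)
Your proposal is correct and follows essentially the same route as the paper: an explicit kernel computation in which, after integration by parts, the contributions where the derivative falls on $\gamma$ (and the $\Delta f$ terms) cancel, leaving only the term where the derivative hits the Coulomb factor $|x-y|^{-1}$, and a final symmetrization in $x\leftrightarrow y$ produces $(\nabla f(x)-\nabla f(y))\cdot(x-y)$. The only difference is organizational — you use cyclicity to compare the kernels of $\gamma X_\gamma$ and $X_\gamma\gamma$ on the diagonal, while the paper manipulates $[X_\gamma,A_f]\gamma$ directly and exchanges variables — and your added remarks on the finite-rank/smooth approximation supply rigor the paper leaves implicit.
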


\begin{proof}
The proof is an explicit computation:
\begin{align*}
i\tr\big([X_\gamma,A_f]\gamma\big) =&i\tr\big(\big[X_\gamma,\big(p\cdot (\nabla f)+(\nabla f)\cdot p\big)\big]\gamma\big)\\
=&+\int_{\R^3}\int_{\R^3}X_\gamma(x,y)\nabla_y\cdot(\nabla f)(y)\gamma(y,x)\,dx\,dy\\
&\qquad-\int_{\R^3}\int_{\R^3}\gamma(y,x)\nabla_x\cdot(\nabla f)(x)X_\gamma(x,y)\,dx\,dy\\
&\qquad+\int_{\R^3}\int_{\R^3}X_\gamma(x,y)(\nabla f)(y)\cdot\nabla_y\gamma(y,x)\,dx\,dy\\
&\qquad-\int_{\R^3}\int_{\R^3}\gamma(y,x)(\nabla f)(x)\cdot\nabla_xX_\gamma(x,y)\,dx\,dy.
\end{align*}
Integrating by parts for the first two terms we find
\begin{align*}
i\tr\big([X_\gamma,A_f]\gamma\big)
=&-\int_{\R^3}\int_{\R^3}\gamma(y,x)(\nabla f)(y)\cdot \nabla_y X_\gamma(x,y)\,dx\,dy\\
&\qquad+\int_{\R^3}\int_{\R^3}X_\gamma(x,y)(\nabla f)(x)\cdot\nabla_x\gamma(y,x)\,dx\,dy\\
&\qquad+\int_{\R^3}\int_{\R^3}X_\gamma(x,y)(\nabla f)(y)\cdot\nabla_y\gamma(y,x)\,dx\,dy\\
&\qquad-\int_{\R^3}\int_{\R^3}\gamma(y,x)(\nabla f)(x)\cdot\nabla_xX_\gamma(x,y)\,dx\,dy.
\end{align*}
Now we use that
$$\nabla_y X_\gamma(x,y)=\frac{1}{|x-y|}\nabla_y\gamma(x,y)+\gamma(x,y)\nabla_y\frac{1}{|x-y|}$$
and we exchange $x$ and $y$ in the second and fourth integral. The final result is
\begin{align*}
&i\tr\big([X_\gamma,A_f]\gamma\big)\\
&\qquad = -\int_{\R^3}\int_{\R^3}|\gamma(x,y)|^2\left((\nabla f)(y)\cdot \nabla_y \frac{1}{|x-y|}+(\nabla f)(x)\cdot \nabla_x \frac{1}{|x-y|}\right)\,dx\,dy\\
&\qquad = \int_{\R^3}\int_{\R^3}\frac{\big(\nabla f(x)-\nabla f(y)\big)\cdot (x-y)}{|x-y|^3}|\gamma(x,y)|^2\,dx\,dy.
\end{align*}
\end{proof}

Inserting this in~\eqref{eq:commutator_HF} gives the following value for the derivative of the expectation value of $A_f$:
\begin{multline}
\frac{d}{dt}\,\tr (A_f\gamma) =-\tr\big([p^2,[p^2,f]]\gamma\big)-2Z\int_{\R^3}\frac{f'(|x|)}{|x|^2}\rho_\gamma(t,x)\,dx\\
+\int_{\R^3}\int_{\R^3}\frac{\big(\nabla f(x)-\nabla f(y)\big)\cdot (x-y)}{|x-y|^3}\left(\rho_\gamma(x)\rho_\gamma(y)-|\gamma(x,y)|^2\right)\,dx\,dy.
\label{eq:commutator_HF2}
\end{multline}
Since $f$ is convex, we have the operator bound
$$-[p^2,[p^2,f]]\geq -f^{(4)}(|x|),$$
which gives
$$-\tr\big([p^2,[p^2,f]]\gamma\big)\geq -\tr\big(f^{(4)}\gamma\big)=-\int_{\R^3}f^{(4)}(|x|)\,\rho_{\gamma(t)}(x)\,dx$$
because  of $\gamma\geq0$. Thus we can argue exactly as in the Hartree case. We start by taking $f_R$ given by~\eqref{eq:choice_f} and define the local mass by
$$M_R(t):=\int_{\R^3}\frac{f'(|x|)}{|x|^2}\rho_{\gamma(t)}(x)\,dx.$$
Then we use the bound~\eqref{eq:estim_potential_f}, that is
$$\frac{\big(\nabla f_R(x)-\nabla f_R(y)\big)\cdot (x-y)}{|x-y|^3}\geq \frac12 \frac{R^2f'(|x|/R)}{|x|^2}\frac{R^2f'(|y|/R)}{|y|^2},$$
as well as the fact that $\rho_\gamma(x)\rho_\gamma(y)\geq|\gamma(x,y)|^2$ for a.\,e.~$x,y \in \R^3$ (by the Cauchy--Schwarz inequality and the eigenfunction expansion for $\gamma$.) This gives
\begin{multline*}
\int_{\R^3}\int_{\R^3}\frac{\big(\nabla f_R(x)-\nabla f_R(y)\big)\cdot (x-y)}{|x-y|^3}\left(\rho_\gamma(x)\rho_\gamma(y)-|\gamma(x,y)|^2\right)\,dx\,dy\\
\geq \frac{M_R(t)^2}2-\frac12\tr (h_R\gamma h_R \gamma),
\end{multline*}
with $h_R:=R^2f'(|x|/R)|x|^{-2}$. Since $0\leq\gamma\leq 1$ and $0\leq h_R\leq 1$, we have $h_R\gamma h_R\leq (h_R)^2\leq h_R$ and therefore
$$\tr (h_R\gamma h_R \gamma)\leq \tr (h_R\gamma)=M_R(t).$$
We conclude that
\begin{multline*}
\int_{\R^3}\int_{\R^3}\frac{\big(\nabla f(x)-\nabla f(y)\big)\cdot (x-y)}{|x-y|^3}\left(\rho_\gamma(x)\rho_\gamma(y)-|\gamma(x,y)|^2\right)\,dx\,dy\\
\geq \frac{M_R(t)^2-M_R(t)}2.
\end{multline*}
The additional term is responsible for the change of $4Z$ into $4Z+1$. In the radial case, we use~\eqref{eq:estim_potential_f_radial} instead and we get rid of the factor of $1/2$ on the left-hand side. The rest of the proof is exactly the same as in the Hartree case.
\end{proof}

\subsection{Many-Body Schr\"odinger Equation}\label{sec:many-body}

Our method also applies to the linear many-body Schr\"odinger equation
\begin{equation}
\begin{cases}
\displaystyle i\frac{\partial}{\partial t}\Psi(t)=H(N,Z)\Psi(t),\\[0.2cm]
H(N,Z)=\displaystyle\sum_{j=1}^N\left(-\Delta_{x_j}-\frac{Z}{|x_j|}\right)+\frac12\sum_{1\leq k\neq \ell\leq N}\frac{1}{|x_k-x_\ell|},\\[0.3cm]
\Psi(0)=\Psi_0\in H^1\left((\R^3)^N\right),
\end{cases}
\label{eq:many-body2}
\end{equation}
of which the Hartree and Hartree-Fock models are nonlinear approximations. 

The Hamiltonian $H(N,Z)$ is self-adjoint and bounded from below on $L^2((\R^3)^N)$ with domain $H^2((\R^3)^N)$ and quadratic form domain $H^1((\R^3)^N)$. Of particular interest are its restrictions to the symmetric (a.\,k.\,a.~bosonic) and antisymmetric (a.\,k.\,a.~fermionic) subspaces. These are also self-adjoint operators, denoted respectively by $H_s(N,Z)$ and $H_a(N,Z)$. In either of these two subspaces, the essential spectrum of $H_{a/s}(N,Z)$ is a half line $[\Sigma_{a/s}(N,Z),\ii)$ where 
$$\Sigma_{a/s}(N,Z)=\inf \text{Spec}\big(H_{a/s}(N-1,Z)\big),$$
by the HVZ--Theorem~\cite{ReeSim4,CycFroKirSim-87}. It is known that there are no positive eigenvalues~\cite{FroHer-82}, but there might be embedded eigenvalues in $[\Sigma_{a/s}(N,Z),0]$. There exists a critical number of particles $N^c_{a/s}(Z)$ such that $H_{a/s}(N,Z)$ has no eigenvalues below $\Sigma_{a/s}(N,Z)$ for $N>N^c_{a/s}(Z)$, see~\cite{Ruskai-82,Sigal-82,Sigal-84}. For bosons, it is known that 
$$\lim_{Z\to\ii}\frac{N^c_{s}(Z)}{Z} = \tilde\gamma_c$$
where $\tilde\gamma_c\simeq 1.21 \leq \gamma_c$ is the largest number of electrons that ground states can have in Hartree theory~\cite{BenLie-83b,Baumgartner-84,Solovej-90}. For fermions, it was proved in~\cite{LieSigSimThi-88} that
$$\lim_{Z\to\ii}\frac{N^c_{a}(Z)}{Z} = 1.$$
The best bound valid for all $N$ goes back to Lieb~\cite{Lieb-84} and it holds both for bosons and fermions:
$N^c_{a/s}(Z)<2Z+1$.
For fermions, it was recently improved to  $N^c_{a}(Z)<1.22\,Z+3Z^{1/3}$ by Nam~\cite{Nam-12}.

All the previous authors seem to have only studied when the Hamiltonian $H_{a/s}(N,Z)$ ceases to have eigenvalues below its essential spectrum. The question of the existence of embedded eigenvalues in $[\Sigma_{a/s}(N,Z),0]$ does not seem to have been addressed so far. But this is  a relevant problem in the context of the time-dependent equation. Our method allows us to prove that there are no eigenvalue at all when $N\geq 4Z+1$.

\begin{theorem}[Linear many-body Schr\"odinger equation] \label{thm:many-body}
The Hamiltonian $H(N,Z)$ has no eigenvalue when $N\geq 4Z+1$.
\end{theorem}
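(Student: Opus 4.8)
The strategy is to transport the virial/positive-commutator argument used for the Hartree equation directly into the $N$-body setting, replacing the one-body multiplier $f_R(|x|)$ by the sum $\sum_{j=1}^N f_R(|x_j|)$ and exploiting that eigenfunctions make the time-derivative of the virial expectation vanish. So suppose $\Psi \in H^1((\R^3)^N)$ (in fact $H^2$, by elliptic regularity, since there are no positive eigenvalues the eigenvalue is $\leq 0$ and the potential is Kato-class) solves $H(N,Z)\Psi = E\Psi$. Define the many-body virial operator
\begin{equation*}
\mathcal{A}_{f_R} := \sum_{j=1}^N \big(p_j\cdot \nabla f_R(x_j) + \nabla f_R(x_j)\cdot p_j\big),
\end{equation*}
with $f_R = R^3 f(\cdot/R)$ and $f(r) = r - \arctan r$ exactly as in Step 2 of the proof of Theorem~\ref{thm:Hartree}. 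Because $\Psi$ is an eigenfunction, $\frac{d}{dt}\langle \Psi, \mathcal{A}_{f_R}\Psi\rangle = \langle \Psi, i[H(N,Z),\mathcal{A}_{f_R}]\Psi\rangle = 0$.

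**Key steps.** First I would expand $i[H(N,Z),\mathcal{A}_{f_R}]$ term by term. The kinetic part contributes $\sum_j \big(-[p_j^2,[p_j^2,f_R(x_j)]]\big)$, and since $f_R$ is convex with $f_R^{(4)} \le 0$ controlled by $-f_R^{(4)}(r) \le 3 f_R'(r)/r^2$ (the bound derived just before~\eqref{eq:final_lower_bound}), Lemma~1 (the $d=3$ lower bound) gives $-\sum_j[p_j^2,[p_j^2,f_R(x_j)]] \ge -\sum_j f_R^{(4)}(|x_j|)$. The external Coulomb part contributes $-2Z\sum_j f_R'(|x_j|)/|x_j|^2$. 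The pair-interaction part contributes, for each pair $k\ne\ell$, a term of the form $\big(\nabla f_R(x_k)-\nabla f_R(x_\ell)\big)\cdot(x_k-x_\ell)/|x_k-x_\ell|^3$, which by Lemma~\ref{lem:f} (inequality~\eqref{eq:estim_potential_f}) is bounded below by $\tfrac12\, h_R(x_k)h_R(x_\ell)$ with $h_R(x) = R^2 f'(|x|/R)/|x|^2 \in (0,1]$. Next I would introduce the one-body density $\rho_\Psi(x) = N\int_{(\R^3)^{N-1}}|\Psi(x,x_2,\dots,x_N)|^2\,dx_2\cdots dx_N$ and the localized mass $M_R := \int_{\R^3} h_R(x)\rho_\Psi(x)\,dx = \sum_j \langle\Psi, h_R(x_j)\Psi\rangle$. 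Summing over $j$ and over pairs, using $-f^{(4)} \le 3 f'/r^2$ and reorganizing the double sum $\sum_{k\ne\ell} h_R(x_k)h_R(x_\ell) = \big(\sum_j h_R(x_j)\big)^2 - \sum_j h_R(x_j)^2$, I would get
\begin{equation*}
0 = \frac{d}{dt}\langle\Psi,\mathcal{A}_{f_R}\Psi\rangle \ge -\Big(2Z+\frac3R\Big)M_R + \frac12\Big\langle\Psi,\Big(\sum_j h_R(x_j)\Big)^2\Psi\Big\rangle - \frac12\sum_j\langle\Psi, h_R(x_j)^2\Psi\rangle.
\end{equation*}
Since $0 \le h_R \le 1$, we have $h_R(x_j)^2 \le h_R(x_j)$, so the last sum is $\le M_R$; and by Cauchy–Schwarz $\big\langle\Psi,(\sum_j h_R(x_j))^2\Psi\big\rangle \ge M_R^2$. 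Hence $0 \ge \tfrac12 M_R^2 - (2Z+3/R+\tfrac12)M_R$, i.e. $M_R \le 4Z + 6/R + 1$ for every $R > 0$.

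**Conclusion and main obstacle.** Letting $R \to \infty$, $h_R \to 1$ pointwise and $h_R \le 1$, so by dominated convergence $M_R \to \int_{\R^3}\rho_\Psi = N$, which yields $N \le 4Z + 1$; equivalently, if $N \ge 4Z+1$ there is no eigenvalue unless the inequality is saturated, and a closer look (using that $\arctan$ never attains its asymptote, so Hardy's inequality and~\eqref{eq:estim_potential_f} are strict) rules out equality, giving $N < 4Z+1$, hence the stated result holds a fortiori whenever $N \ge 4Z+1$. The main technical obstacle is the rigorous justification of the virial identity itself: $\mathcal{A}_{f_R}$ is a first-order operator and $\langle\Psi, [H,\mathcal{A}_{f_R}]\Psi\rangle = 0$ is not automatic for a merely $H^1$ or $H^2$ eigenfunction—one needs either that $\nabla f_R \in L^\infty$ makes $\mathcal{A}_{f_R}$ suitably bounded relative to the form, a commutator/regularization argument (approximating $f_R$ by compactly supported cutoffs and controlling the errors via exponential decay of $\Psi$, which follows from $E \le 0$ and standard Agmon estimates), or to invoke the known absence of positive eigenvalues together with the virial theorem of Weidmann/Kalf. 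The pointwise inequality~\eqref{eq:estim_potential_f} and the double-commutator bound of Lemma~1 then do all the real work, exactly paralleling the Hartree and Hartree–Fock proofs, with the exchange-type complication absent here because the interaction is a genuine two-body multiplication operator.
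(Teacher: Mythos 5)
Your proposal is correct and follows essentially the same route as the paper: the multiplier $\sum_j f_R(|x_j|)$, the double-commutator/Hardy lower bound for the kinetic part, the pointwise estimate \eqref{eq:estim_potential_f} of Lemma~\ref{lem:f} for the pair interaction, the reorganization $\sum_{k\neq\ell}h_R(x_k)h_R(x_\ell)=\big(\sum_j h_R(x_j)\big)^2-\sum_j h_R(x_j)^2$ together with Jensen and $0\leq h_R\leq 1$, and the limit $R\to\infty$. The only cosmetic differences are that you absorb $-f^{(4)}$ into a $3/R$ error (as in Step 2 of the Hartree proof) instead of letting that term vanish in the limit, and your remark on strictness (needed to exclude $N=4Z+1$) corresponds to the paper's strict inequality coming from the dropped positive Hardy-square terms.
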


Here we do not distinguish between the different particle statistics. Thus our result applies to all of $L^2((\R^3)^N)$ and it deals with all possible symmetries. We, however, conjecture that the largest $N$ such that $H_{a/s}(N,Z)$ can have eigenvalues behaves like $N_{a/s}^c(Z)$ for large $Z$.

\begin{proof}
Let $\Psi\in H^2((\R^3)^N)$ be an eigenfunction of $H(N,Z)$ and let $f_R(|x|)=R^3f(|x|/R)$ be as in~\eqref{eq:choice_f}. Then we write
\begin{align*}
0=&\pscal{\Psi,i\left(H(N,Z)\sum_{j=1}^N(A_{f_R})_{x_j}-\sum_{j=1}^N(A_{f_R})_{x_j}H(N,Z)\right)\Psi}\\
=&\sum_{j=1}^N\pscal{\Psi,i[p_j^2,(A_{f_R})_{x_j}]]\Psi}
-2\sum_{j=1}^N\pscal{\Psi,\nabla f_R(x_j)\cdot\nabla_{x_j}\!\left(-\frac{Z}{|x_j|}+\frac12\sum_{k\neq j}\frac{1}{|x_j-x_k|}\right)\Psi}\\
>&-\frac1R\int_{\R^3}f^{(4)}\left(\frac{|x|}{R}\right)\rho_\Psi(x)\,dx-2Z\int_{\R^3}\frac{R^2f'(|x|/R)}{|x|^2}\rho_\Psi(x)\,dx\\
&\qquad\qquad\qquad +\pscal{\Psi,\left(\sum_{1\leq j\neq k\leq N}\frac{\big(\nabla f_R(x_j)-\nabla f_R(x_k)\big)\cdot(x_j-x_k)}{|x_j-x_k|^3}\right)\Psi}.
\end{align*}
Using~\eqref{eq:estim_potential_f}, we get
\begin{align*}
&\pscal{\Psi,\left(\sum_{1\leq j\neq k\leq N}\frac{\big(\nabla f_R(x_j)-\nabla f_R(x_k)\big)\cdot(x_j-x_k)}{|x_j-x_k|^3}\right)\Psi}\\
&\qquad\qquad\geq \frac12\pscal{\Psi,\left(\sum_{1\leq j\neq k\leq N}\frac{R^2f'_R(|x_j|)}{|x_j|^2}\frac{R^2f'_R(|x_k|)}{|x_k|^2}\right)\Psi}\\
&\qquad\qquad= \frac12\pscal{\Psi,\left(\sum_{j=1}^N\frac{R^2f'_R(|x_j|)}{|x_j|^2}\right)^2\Psi}- \frac12\pscal{\Psi,\left(\sum_{j=1}^N\left(\frac{R^2f'_R(|x_j|)}{|x_j|^2}\right)^2\right)\Psi}\\
&\qquad\qquad\geq \frac12\pscal{\Psi,\left(\sum_{j=1}^N\frac{R^2f'_R(|x_j|)}{|x_j|^2}\right)\Psi}^2- \frac12\pscal{\Psi,\left(\sum_{j=1}^N\frac{R^2f'_R(|x_j|)}{|x_j|^2}\right)\Psi}\\
& \qquad\qquad= \frac12\left(\int_{\R^3}\frac{R^2f'(|x|/R)}{|x|^2}\rho_\Psi(x)\,dx\right)^2- \frac12\int_{\R^3}\frac{R^2f'(|x|/R)}{|x|^2}\rho_\Psi(x)\,dx.
\end{align*}
In the last line we have used Jensen's inequality as well as the fact that $f'(r)/r^2=1/(1+r^2)\leq 1$. 
Passing to the limit as $R\to\ii$ gives $N<4Z+1$.
\end{proof}

Since $H(N,Z)$ has no eigenvalue when $N\geq 4Z+1$, it follows from the known existence of scattering and the asymptotic completeness~\cite{Derezinski-93,SigSof-94,HunSig-00b} that any solution $\Psi(t)$ of the time-dependent equation~\eqref{eq:many-body2} behaves (in an appropriate sense) as a superposition of bound states of $H(k,Z)$ with $k< 4Z+1$ plus a scattering part. In particular, it is possible to prove that 
$$\limsup_{t\to\ii}\int_{|x|\leq r}\rho_{\Psi}(t,x)\,dx\leq 4Z+1.$$
By using argument in the proof of Theorem~\ref{thm:many-body} and following step by step the method of Section~\ref{sec:proof}, one can get a simple proof of the weaker result
$$\limsup_{T\to\ii}\frac1T\int_0^T\,dt\int_{|x|\leq r}\rho_{\Psi}(t,x)\,dx\leq 4Z+1.$$

\bigskip
\noindent\textbf{Acknowledgments.}  E.\,L.~was partially supported through a Steno fellowship from the Danish Research Council (FNU). M.\,L.~acknowledges financial support from the French Ministry of Research (ANR-10-BLAN-0101) and from the European Research Council under the European Community's Seventh Framework Programme (FP7/2007-2013 Grant Agreement MNIQS 258023).


\end{document}